\documentclass[11pt,leqno]{article}
\usepackage{amsthm,amsfonts,amssymb,epsfig,graphics,amsmath,eufrak}
\usepackage[latin1]{inputenc}\relax

\setlength{\evensidemargin}{0in} \setlength{\oddsidemargin}{0in}
\setlength{\textwidth}{6in} \setlength{\topmargin}{0in}
\setlength{\textheight}{8in}

\newcommand{\sL}{{L_*^\eps}}

\newcommand{\csL}{{\cL_*^\eps}}
\newcommand{\csLd}{{(\cL_*^\eps)^\dagger}}
\newcommand{\csLe}{{\cL_*^{\eps,\eta}}}

\newcommand{\sgn}{{\text{\rm sgn}}}

\font\tenronde=rsfs10
\font\sevenronde=rsfs7
\font\fiveronde=rsfs5
\newfam\rondefam
\scriptscriptfont\rondefam=\fiveronde
\textfont\rondefam=\tenronde
\scriptfont\rondefam=\sevenronde
\def\ronde{\fam\rondefam\tenronde}

\newcommand{\sS}{{\ronde S}}

\renewcommand{\Re}{\text{\rm Re }}

\newcommand{\const}{\text{\rm constant}}

\def\g{\gamma}


\newcommand{\RR}{{\mathbb R}}

\newcommand{\mA}{{\mathbb A}}

\newcommand\cB{{\mathcal  B}} 

\newcommand\cU{{\mathcal  U}}

\newcommand\cV{{\mathcal  V}}

\newcommand\cR{{\mathcal  R}}

\newcommand\cL{{\mathcal  L}}
\newcommand\cN{{\mathcal  N}}

\newcommand\cF{{\mathcal  F}}

\newcommand\cT{{\mathcal T}}
\newcommand\cS{{\mathcal S}}

\newcommand{\mez}{{\frac{1}{2}}}




\def\eps{\varepsilon }
\def\blockdiag{\text{\rm blockdiag}}

\def\D{\partial }
\newcommand\adots{\mathinner{\mkern2mu\raise1pt\hbox{.}
\mkern3mu\raise4pt\hbox{.}\mkern1mu\raise7pt\hbox{.}}}

\newcommand{\Id}{{\rm Id }}
\newcommand{\im}{{\rm Im }\, }
\newcommand{\re}{{\rm Re }\, }

\newcommand{\la}{\langle }
\newcommand{\ra}{\rangle }

\newtheorem{theo}{Theorem}[section]
\newtheorem{prop}[theo]{Proposition}
\newtheorem{cor}[theo]{Corollary}
\newtheorem{lem}[theo]{Lemma}

\newtheorem{ass}[theo]{Assumption}

\newtheorem{rem}[theo]{Remark}


%

\numberwithin{equation}{section}


\begin{document}

\title{Existence of semilinear relaxation shocks}

\author{\sc \small Guy M\'etivier\thanks{
IMB, Universit\'e de Bordeaux,
33405 Talence Cedex, France; metivier@math.u-bordeaux.fr.,
},
Kevin Zumbrun\thanks{Indiana University, Bloomington, IN 47405;
kzumbrun@indiana.edu:
K.Z. thanks the University of Bordeaux I
for its hospitality during the visit in which
this work was carried out.
Research of K.Z. was partially supported
under NSF grants number DMS-0070765 and DMS-0300487.
 }}


\maketitle

\begin{abstract}
We establish existence with sharp rates of decay and distance
from the Chapman--Enskog approximation
of small-amplitude shock profiles of a class of
semilinear relaxation systems including 
discrete velocity models obtained
from Boltzmann and other kinetic equations.
Our method of analysis is based on the macro--micro decomposition introduced
by Liu and Yu for the study of Boltzmann profiles,
but applied to the stationary rather than the time-evolutionary equations.
This yields a simple proof by contraction mapping in weighted $H^s$ spaces.
\end{abstract}

\tableofcontents


\section{Introduction}\label{intro}

We consider the problem of existence of relaxation profiles 
\begin{equation}\label{relaxprof}
U(x,t)=\bar U(x-st), \quad \lim_{z\to \pm \infty}\bar U(z)=U_\pm
\end{equation}
of a semilinear relaxation system 
\begin{equation}\label{grelax}
U_t +F(U)_x= Q(U), 
\end{equation}
in one spatial dimension,   with the following structure: 

\begin{ass}\label{1.1}

(H1)  The flux $F$ is linear in $U$, so that  $F(U) = A U$ for some constant matrix $A$. 

(H2)  There are linear coordinates  such that 
\begin{equation}\label{block}
\quad U= \begin{pmatrix} u\\v \end{pmatrix},
\quad Q=\begin{pmatrix} 0\\q\end{pmatrix},
\end{equation}
$u\in \RR^n$, $v\in \RR^r$. 

(H3) $q$ has nondegenerate equilibria parametrized by $u$; more precisely there are a smooth function   
 $v_*$ from $  \RR^n$ to $ \RR^r$ and $\theta > 0$ such that for all $u$:
\begin{equation}\label{qassum1}
q(u,v_*(u))=  0, \qquad  
\Re \sigma (\partial_v q(u,v_*(u)))\le -\theta, 
\end{equation}
$\sigma(\cdot )$ denoting spectrum  and $\Re$ the real part.
\end{ass}

Common examples are discrete kinetic models obtained
by discrete velocity or other approximation from continuous kinetic models
such as Boltzmann or Vlasov--Poisson equations; 
for example, Broadwell and other lattice gas models [PI].
Other examples are the semilinear relaxation schemes introduced by
Jin--Xin [JX] and Natalini [N]
for the purpose of numerical approximation of hyperbolic systems.
Here, we are thinking particularly of the case $n$ bounded
and  $r>>1$ arising through discretization of the Boltzmann
equation, or the case $r \to \infty$ arising in Boltzmann itself;
that is, {\it we seek estimates and proof independent of the dimension
of $v$.}

For fixed $n$, $r$, the existence problem has been treated in
\cite{YZ, MaZ1} under the additional assumption 
\begin{equation}\label{nondeg}
\det (dF-sI)\ne 0
\end{equation}
corresponding to nondegeneracy of the traveling-wave ODE.
However, as pointed out in \cite{MaZ2,MaZ3}, this assumption is
unrealistic for large models, and in particular is not satisfied for
the Boltzmann equations, for which the eigenvalues of $dF$ are
constant particle speeds of all values.
Our goal here, therefore, is to revisit the existence problem
{\it without the assumption \eqref{nondeg}}, with the
eventual aim being to establish a simple proof of existence of
small-amplitude Boltzmann profiles.
Of course, existence of such was established some time ago
in \cite{CN}; however, the proof is rather complicated, involving
detailed resolvent estimates in weighted $L^\infty$ spaces in spatial
and velocity variables, and
so it seems of use to seek a simpler approach based on weighted
$L^2$ spaces and standard energy estimates.

Our method of analysis is motivated by the ``macro-micro decomposition''
technique introduced by Liu and Yu \cite{LY}, 
in which fluid (macroscopic, or equilibrium) and transient
(microscopic) effects are separated and estimated by different techniques.
This was used in \cite{LY} to show
by a study of the time-evolutionary equations
that the Boltzmann profile constructed in \cite{CN} 
has nonnegative probability density,
that is, to show positivity of Boltzmann profiles
{\it assuming that such a profile exists}.

Our approach here is very much in the spirit of that of \cite{LY},
based on approximate Chapman--Enskog expansion combined with
Kawashima type estimates (the macro--micro decomposition
of the reference),
but carried out for the {\it stationary} (traveling-wave) rather than the
time-evolutionary equations, and estimating the finite-dimensional
fluid part using sharp ODE estimates in place of the 
energy estimates of \cite{LY}.
In this latter part, we are much aided by the more favorable properties
of the stationary fluid equations, a rather standard boundary
value ODE system, as compared to the time-evolutionary equations, a 
hyperbolic--parabolic system of PDE.

Our main result is to show existence with sharp rates of decay and distance
from the Chapman--Enskog approximation of small-amplitude quasilinear relaxation
shocks in the general case that the profile ODE may become degenerate.
See Sections \ref{model} and \ref{CEapprox} for model assumptions and 
description of the Chapman--Enskog approximation, and 
Section \ref{results} for a statement of the main theorem.
In the present, semilinear case, 
a simple contraction-mapping argument suffices;
the quasilinear case is treated by Nash--Moser iteration in \cite{MeZ1}.
In \cite{MeZ2}, we show that the argument of this paper carries
over with minor modifications to the infinite-dimensional Boltzmann equation 
with hard potential to yield existence of small-amplitude Boltzmann shock 
profiles, recovering and slightly sharpening the results of \cite{CN}.
This in a sense completes the analysis of \cite{LY}, providing by
a common set of techniques both existence (through the present argument)
and (through the argument of \cite{LY}) positivity.
At the same time it gives a truly elementary 
proof of existence of Boltzmann profiles.

Finally, we note that spectral stability has been shown for general
small-amplitude quasilinear relaxation profiles in \cite{MaZ3}, without the
assumption \eqref{nondeg}, under the assumption that the profile
exist and satisfy exponential bounds like those of the viscous case.
The results obtained here verify that assumption, completing the 
analysis of \cite{MaZ3}.
It would be very interesting to continue along the same lines to
obtain a complete nonlinear stability result as in \cite{MaZ1},
in particular for Boltzmann shocks.

Existence results in the absence of condition
\eqref{nondeg} have been obtained in special cases in \cite{MaZ5,DY}
by quite different methods
(for example, 
center-manifold expansion near an assumed single degenerate point \cite{DY}).
However, the decay bounds as stated,
though exponential, are not sufficiently
sharp with respect to $\eps$ for the needs of \cite{MaZ3}.
More important, the techniques used in these analyses do not appear to 
generalize to the infinite-dimensional (e.g., Boltzmann) case.

\section{Model, assumptions, and the reduced system}\label{model}

Taking without loss of generality $s=0$, we study the traveling-wave
ODE
\begin{equation}\label{relax}
AU'= Q(U),
\end{equation}
\begin{equation}\label{relaxform}
\quad  U=\begin{pmatrix} u\\v \end{pmatrix},
\quad A\equiv \const,
\quad Q=\begin{pmatrix} 0\\q(u,v)\end{pmatrix}
\end{equation}
governing solutions of  \eqref{relaxprof}, where $q$ satisfies 
\eqref{qassum1}. 
 
We use the notations  
\begin{eqnarray}
\label{A}
 A= \begin{pmatrix} A_{11}& A_{12}\\A_{21}& A_{22}\end{pmatrix},
\\\label{f}
f(u,v):= A_{11}u + A_{12}v.
\end{eqnarray}

We make the standard assumption of {\it simultaneous symmetrizability}
\cite{Y}: 
 
\begin{ass}\label{SS}
(SS) \quad  There exists a smooth,  symmetric and uniformly positive
definite matrix $S(U)$ such that 

\quad i)  for all $U$, $S(U)A$ is symmetric, 

\quad ii) for all equilibria $ U_*= (u, v_*(u))$,
$S\,dQ(U_*)$ is symmetric  nonpositive with
\begin{equation}\label{rank}
\dim \ker SdQ=\dim \ker dQ \equiv n.
\end{equation}

\end{ass}\label{GC}

 We  also make the Kawashima assumption of {\it genuine coupling} \cite{K}:
 
 \begin{ass} 
 \label{assGC}
(GC) \quad  For all equilibria $ U_*= (u, v_*(u))$,  there exists no eigenvector of 
$A$ in the kernel of $dQ(U_*)$.
Equivalently, given Assumption \ref{SS} (see \cite{K}), 
there exists in a neighborhood $\cN$ of the
equilibrium manifold a skew symmetric $K=K(U)$ such that 
\begin{equation}\label{K}
\Re (KA-SdQ)(U) \ge \theta>0
\end{equation}
for all $U\in \cN$.
\end{ass}
Recall from \cite{Y} (see aslo Section~\ref{secCE}), that the reduced, Navier--Stokes type equations
obtained by Chapman--Enskog expansions are
\begin{equation}\label{ce} 
f_*(u)'= (b_*(u)u')',
\end{equation}
where
\begin{eqnarray}
\label{cevalues} 
&f_*(u)&:=f(u,v_*(u))=A_{11}u+A_{12}v_*(u),
\\
\label{bstar}
&b_*(u)&:= -A_{12} c_* (u) 
\end{eqnarray}
with
\begin{equation}
\label{cstar}
\begin{aligned}
c_* (u) := \partial_v  &q^{-1} (u,v_*(u))&\\
&\Big(A_{21} +  A_{22}d v_*(u)    - d v_*(u) (A_{11}+ A_{12} d v_*(u) 
\Big). 
\end{aligned}
\end{equation}
 Note also, by the Implicit Function Theorem, that 
$$
d v_*(u) = -   \partial_v q^{-1}\partial_u q (u, v_*(u)). 
$$
 For the reduced system \eqref{ce}, 
simultaneous symmetrizability becomes:
\medbreak
(ss)\quad
There exists $s(u)$ symmetric positive
definite such that $s\, df_*$ is symmetric and $sb_*$ is
symmetric positive semidefinite.
\medbreak
\noindent 
We have likewise a notion of genuine coupling \cite{K}:

\medbreak
(gc)\quad 
There is no eigenvector of $df_*$ in $\ker b_*$.
\medbreak

We note first the following important observation of \cite{Y}.

\begin{prop}[\cite{Y}]\label{redconds}
Let \eqref{relax} as described above be a symmetrizable
system satisfying the genuine coupling condition (GC).
Then, the reduced system \eqref{ce} is a symmetrizable
system satisfying genuine coupling condition (gc).
\end{prop}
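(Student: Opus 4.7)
My plan is to construct the reduced symmetrizer $s(u)$ by pulling back the full symmetrizer $S(U_*(u))$ along the embedding of the equilibrium manifold. Let $P(u):=\begin{pmatrix} I \\ d v_*(u) \end{pmatrix}$, the $(n+r)\times n$ matrix whose columns span $\ker dQ(U_*)$, and set $s(u) := P(u)^T S(U_*(u))\, P(u)$. Since $P$ has full column rank and $S>0$, the matrix $s$ is automatically symmetric positive definite; a short computation in fact identifies $s$ with the Schur complement of $S_{22}$ in $S$.

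The first step is to extract a key structural identity from (SS): symmetry of $SdQ$ at $U_*$, together with $\partial_u q+\partial_v q\, d v_*=0$ and invertibility of $\partial_v q$, forces
\begin{equation*}
 S_{12}=-d v_*^{\,T}\, S_{22}.
\end{equation*}
Geometrically this says $P^T S\begin{pmatrix} 0\\ w\end{pmatrix}=0$ for every $w\in\RR^r$, so tangent and fiber directions of the equilibrium manifold are $S$-orthogonal. Combined with the tautological identity $AP=P\,df_*+\begin{pmatrix}0\\ \partial_v q\, c_*\end{pmatrix}$ (just a rewriting of the definition of $c_*$), this yields $s\, df_*=P^T SAP$, which is symmetric by (SS)(i). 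Thus the first half of (ss) holds.

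For the diffusion I would next compute $sA_{12}$ by playing the three symmetry conditions on the blocks of $SA$ (symmetry of $(SA)_{11}$ and $(SA)_{22}$, plus $(SA)_{12}=(SA)_{21}^T$) against $S_{12}=-d v_*^T S_{22}$; a short block manipulation produces $sA_{12}=L^T S_{22}$, where $L:=A_{21}+A_{22}\, d v_*-d v_*\, df_*$ is the coupling matrix, so that $c_*=(\partial_v q)^{-1}L$. Hence
\begin{equation*}
 s b_* = -s A_{12}\, c_* = L^T\bigl[-S_{22}(\partial_v q)^{-1}\bigr]L.
\end{equation*}
The middle factor is symmetric positive definite: by (SS)(ii) together with the rank condition and invertibility of $\partial_v q$, $S_{22}\,\partial_v q$ is symmetric and negative definite on $\RR^r$, and inverting preserves symmetry and sign. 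Thus $sb_*$ is symmetric positive semidefinite with $\ker sb_*=\ker L=\ker b_*$, completing (ss).

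For (gc), suppose $df_*\xi=\lambda\xi$ and $b_*\xi=0$. Then $L\xi=0$ by the kernel identification above, and $U:=P(u)\xi=(\xi,\, d v_*\xi)^T\in\ker dQ(U_*)$ satisfies
\begin{equation*}
 AU=\begin{pmatrix} df_*\xi \\ (A_{21}+A_{22}\, d v_*)\xi\end{pmatrix}=\begin{pmatrix}\lambda\xi\\ \lambda\, d v_*\xi\end{pmatrix}=\lambda U,
\end{equation*}
where the second equality uses $L\xi=0$; so $U$ is an eigenvector of $A$ in $\ker dQ(U_*)$, and (GC) forces $U=0$, hence $\xi=0$. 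The main obstacle in this proof is the purely algebraic step producing $sA_{12}=L^T S_{22}$: it is short but requires simultaneous use of all three block symmetries of $SA$ along with $S_{12}=-d v_*^T S_{22}$. Once that identity is in hand, both halves of (ss) and the (gc) argument fall out essentially for free.
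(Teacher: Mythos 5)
Your proof is correct and is essentially the paper's argument in different notation: your $s=P^TSP$ is exactly the block $\widetilde S_{11}$ of the paper's transformed symmetrizer $\widetilde S=\mathcal P^*S\mathcal P$ with $\mathcal P=\begin{pmatrix}I&0\\dv_*&I\end{pmatrix}$, your identity $S_{12}=-dv_*^TS_{22}$ is the paper's block-diagonality of $\widetilde S$, your $sA_{12}=L^TS_{22}$ is the paper's $\widetilde S_{11}\widetilde A_{12}=(\widetilde S_{22}\widetilde A_{21})^*$ with $L=\widetilde A_{21}$, and the congruence expression for $sb_*$ and the kernel identification $\ker b_*=\ker L$ feeding into (gc) are identical. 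The only difference is presentational (and you supply some block computations the paper's sketch omits), so no further comparison is needed.
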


\begin{proof}  We  only give here a sketch, mentioning the key points and refereeing 
  e.g. to  \cite{MaZ3} for details.  Fixing $ U$, one is reduced to  constant matrices and linear algebra, 
  with matrices 
  $$
  A = \begin{pmatrix} A_{11}& A_{12}\\A_{21}& A_{22}\end{pmatrix},\quad 
  Q = \begin{pmatrix} 0& 0\\Q_{21}& Q_{22}\end{pmatrix}, 
  $$
 and symmetrizer $S$.  With 
 $$
 P = \begin{pmatrix} \Id & 0 \\ V & \Id \end{pmatrix}, \qquad V = - Q_{22}^{-1} Q_{21}, 
 $$
 the change of unknowns $ U = P \widetilde U$ transforms the problem to an equivalent one 
 with matrices $\widetilde A = P^{-1} A P$,  $\widetilde Q = P^{-1} Q P$ and symmetrizer 
 $\widetilde S = P^* S P $, with  
 $$
  \widetilde Q = \begin{pmatrix} 0& 0\\0 & Q_{22}\end{pmatrix}. 
 $$
Therefore $\widetilde S$ is  block diagonal  ($\widetilde S_{21} = 0$ and $ \widetilde S_{12} = 0$), 
$\widetilde S_{11} \widetilde A_{11} $ is symmetric, 
 $ \widetilde S_{11} \widetilde A_{12} = ( \widetilde S_{22} \widetilde A_{21})^*$ and 
 $\widetilde S_{22} \widetilde Q_{22}$ is definite negative. 
 Next, the associated matrix $\widetilde b$ is :  
 $$
  \widetilde b  = 
 - \widetilde A_{12} \widetilde Q_{22}^{-1} \widetilde A_{21} . 
 $$
 Thus $\widetilde S_{11}$ is definite positive, symmetrizes $\widetilde A_{11}$ and 
 $$
 \widetilde S_{11}  \widetilde b  = 
 -   \widetilde S_{11}  \widetilde A_{12} (\widetilde S_{22} \widetilde Q_{22}^{-1})^{-1}  S_{22}  \widetilde A_{21} 
  = 
 -  (\widetilde S_{22}  \widetilde A_{21} )^*(\widetilde S_{22} \widetilde Q_{22}^{-1})^{-1} 
 \widetilde S_{22}  \widetilde A_{21} 
 $$
 is symmetric and nonnegative.   
Noticing that 
 \begin{eqnarray}
 \label{chvarA}
&  \tilde A_{11}     &= A_{11} + A_{12} V,  
 \\
 \label{chvarb}
 & \widetilde b &   =     
  - A_{12} Q_{22}^{-1} \big(  A_{21} + A_{22} V  - V ( A_{11} + A_{12} V) \big)   =  b_* 
\end{eqnarray}
 this implies that the property (ss) is satisfied with symmetrizer 
 $s = \widetilde S_{11}$ (In terms of  the original matrices, 
 $s  = S_{11} +  S_{12} V  = S_{11} +  S_{12} V + V^* (S_{21} +  S_{22} V)  $, 
 since $ S_{21} +  S_{22} V = 0 $ as a consequence of  the block diagonal structure of 
 $\widetilde S$).  
 
 Similarly, the property (GC) is transported to the system $(\widetilde A, \widetilde Q)$, meaning that 
 $$
 \widetilde A_{11} u = \lambda u, \quad \widetilde A_{21} u = 0 \qquad \Rightarrow \qquad u = 0. 
 $$
 The symmetry property of $\widetilde S_{11} \widetilde b$ implies that 
 $$
 \ker \widetilde b = \ker \widetilde  A_{21} 
 $$
 and the property  (gc) immediately follows.
\end{proof}

Besides the basic properties guaranteed by Proposition \ref{redconds},
we assume that the reduced system satisfy the following important
additional conditions.

\begin{ass}\label{goodred}

(i) The matrix $b_*(u)$ has constant left kernel.

(ii) For all values of $u$,  $\ker \pi_* df_*(u) \cap \ker b_*(u) = \{ 0 \}$,   
where $\pi_*(u)$ is the zero eigenprojection associated with $b_*(u)$.   

\end{ass}

The importance of Assumption \ref{goodred} in the present situation
is that it ensures that the zero-speed profile problem for the reduced system,
\begin{equation}\label{vprof}
f_*(u)'=(b_*(u)u')', \quad \lim_{z\to \pm \infty} u(z)=u_\pm 
\end{equation}
or, after integration from $-\infty$ to $x$,
\begin{equation}\label{intvprof}
b_*(u)u'= f_*(u)-f_*(u_\pm),
\end{equation}
may be expressed as a nondegenerate ODE in $u_2$, coordinatizing
$u=(u_1,u_2)$ with $u_1=\pi_*u$ and $u_2=(I-\pi_*)u$.

Condition (i) was pointed out in \cite{Ze}, condition (ii) in
\cite{MaZ3,Z1,GMWZ}.

\begin{rem}\label{couplingrmk}
\textup{
Assumption \ref{goodred} is also central to the linearized stability
analysis of general Navier--Stokes type equations in \cite{Z2, GMWZ}.
It appears to be independent from the genuine coupling conditions
(GC), (gc), except in the special case that $u$ is scalar, for which
(GC), (gc) reduce to $\ker b_*=\emptyset$. 
It is satisfied for the important example of Boltzmann
equations, which is our main motivation.
}
\end{rem}

Next, we assume that the classical theory of    weak shocks 
can be applied to  \eqref{vprof}, assuming that the flux $f_*$ has a  genuinely nonlinear 
eigenvalue near $0$:

\begin{ass}\label{profass}
 
In a neighborhood $\cU_*$ of   a given base state $u_0$,  
$df_*$ has a simple eigenvalue $\alpha$ near zero, with $\alpha (u_0) = 0$, and such that the
associated hyperbolic characteristic field is genuinely 
nonlinear, i.e., after a choice of orientation, $\nabla \alpha \cdot r(u_0) <  0$, where
$r$ denotes the eigendirection associated with $\alpha$.
 
\end{ass}

\begin{rem}
\textup{Assumption \ref{profass} is standard,
and is satisfied in particular for the compressible
Navier--Stokes equations resulting from Chapman--Enskog approximation
of the Boltzmann equation. 
Assumptions \ref{SS} and \ref{GC} are verified in \cite{Y} 
for a wide variety of discrete kinetic models.
Assumptions \ref{goodred} and \ref{profass} on the reduced equations
must be checked in individual cases.
}
\end{rem}


\section{Chapman--Enskog approximation} \label{CEapprox}
\label{secCE}

Integrating the first equation of \eqref{relax} and 
noticing that the end states $(u_\pm, v_\pm) $ must be equilibria and thus satisfy 
$v_\pm = v_*(u_\pm)$,  we obtain
\begin{equation}\label{intprof}
\begin{aligned}
A_{11} u + A_{12} v &=  f_*(u_\pm),\\
A_{21}u'+ A_{22}v'&=q(u,v).
\end{aligned}
\end{equation}

Because $f$ is linear,  the first equation reads
\begin{equation}\label{T1}
f_*(u) + A_{12}(v-v_*(u))  = f_*(u_\pm).
\end{equation}
The idea of Chapman--Enskog approximation is that 
$v -  v_*(u)$ is small  (compared to the fluctuations $u - u_\pm$). Taylor expanding the second equation, we obtain
$$
\begin{aligned}
(A_{21}+A_{22}dv_*(u)) u' + A_{22}(v-v_*(u))'&=
\partial_v q(u,v_*(u))(v-v_*(u)) 
\\
&\quad + O(|v-v_*(u)|^2),    
\end{aligned}
$$
or inverting $\partial_v q$  
\begin{equation}\label{T2}
\begin{aligned}
v-v_*(u)  =  & \ \partial_v q^{-1} (u, v_*(u))   \big(A_{21} + A_{22} d v_*(u) \big) 
 u' \\
& + O(|v-v_*(u)|^2) + O(|(v-v_*(u))'|). 
\end{aligned}
\end{equation}
The derivative of \eqref{T1} implies that 
$$
 \big( A_{11} u + A_{12} dv_* (u) \big) u'  =  O(|(v-v_*(u))'|) . 
$$
 Therefore,  \eqref{T2} can be replaced by 
 \begin{equation}\label{T2b}
\begin{aligned}
v-v_*(u)&= c_* (u) u' 
   + O(|v-v_*(u)|^2) + O(|(v-v_*(u))'|),
\end{aligned}
\end{equation}
where  $c_*$ is defined at \eqref{cstar}. 
Substituting in \eqref{T1},  we thus obtain the approximate viscous profile ODE
\begin{equation}\label{approxprof}
\begin{aligned}
b_*(u)u'&= f_*(u) -f_*(u_\pm) 
+ O(|v-v_*(u)|^2) + O(|(v-v_*(u))'|),
\end{aligned}
\end{equation}
where $b_*$ is as defined in \eqref{bstar}.
\begin{rem}
\label{rem31}  \textup{ 
The above calculation leaves a great deal
of flexibility in the choice of   $b_*$ satisfying \eqref{approxprof},  namely  it is only
specified modulo multiples of }
$$
  A_{11} + A_{12} d v_* (u), 
$$
\textup{as we used when passing from \eqref{T2} to \eqref{T2b}. However,  
we have chosen to use the standard   definition \eqref{cevalues} of $b_*$ because 
it is the natural choice which is invariant by change of variables (see \eqref{chvarb}) 
and it  is known
by Proposition \ref{redconds} and by the explicit example of Boltzmann
to have good properties.  
But, it might be, for example, that a different representative
could be strictly parabolic, so slightly easier to handle.
This seems to be just a curiosity, as the analysis is already sufficient
to treat the standard case.
But, it is interesting from the viewpoint of the Chapman--Enskog
expansion and possible alternative representations. }
\end{rem}

Motivated by \eqref{T2}--\eqref{approxprof}, we define an approximate
solution $(\bar u_{NS}, \bar v_{NS})$ of \eqref{intprof} by choosing 
$\bar u_{NS}$  as a solution of 
\begin{equation}
\label{NS}
b_*(\bar u_{NS})\bar u_{NS}' = f_*(\bar u_{NS}) -f_*(u_\pm),
\end{equation}
and $\bar v_{NS}$  as the first approximation given by \eqref{T2} 
\begin{equation}
\label{NSv}
\begin{aligned}\bar v_{NS} -v_*(\bar u_{NS})  =    c_* (\bar u_{NS}) 
 \bar u_{NS}'.
\end{aligned}
\end{equation}

Small amplitude shock profiles   solutions of \eqref{NS}  are constructed 
using the center manifold  analysis of \cite{Pe}
under conditions (i)-(ii) of Assumption \ref{goodred}; see discussion  in 
\cite{MaZ5}.

\begin{prop}\label{NSprofbds} Under Assumptions~\ref{profass} and \ref{goodred}, 
in a neighborhood of 
$(u_0, u_0)$ in $\RR^n \times \RR^n$, 
there is a smooth  manifold $\cS$ of dimension $n$  passing through $(u_0, u_0)$,  such that 
for $(u_-, u_+) \in \cS$ with   amplitude $\eps:=|u_+ -u_-| > 0$ 
sufficiently small, and direction $(u_+-u_-)/\eps $ sufficiently close
to $r(u_0)$,   the zero speed shock profile equation   \eqref{NS} has  a unique (up to translation) 
solution   $\bar u_{NS}$ in $\cU_*$. 
The shock profile is necessarily of {\rm Lax type}: i.e., with
dimensions of the unstable subspace of $df_*(u_-)$
and the stable subspace of $df_*(u_+)$ summing to one plus the
dimension of $u$, that is $n+1$.

Moreover, 
there is  $\theta>0$ and for all $k$ there is $C_k $ independent of $(u_-, u_+) $ and $\eps$,   
such that 
\begin{equation}\label{NSbds}
|\partial_x^k (\bar u_{NS}-u_\pm)|\le C_k \eps^{k+1}e^{-\theta \eps|x|},
\quad x\gtrless 0. 
\end{equation}
\end{prop}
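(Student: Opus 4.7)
The plan is to reduce the integrated profile equation \eqref{intvprof} to a scalar ODE on a one-dimensional center manifold and then invoke classical small-amplitude Lax shock theory in the style of Pego \cite{Pe} (as discussed in \cite{MaZ5}).

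First I would use Assumption~\ref{goodred}(i) to split \eqref{intvprof}. Let $\tilde\pi$ be a fixed projection onto the left kernel of $b_*$; left-multiplication by $\tilde\pi$ yields the algebraic constraint
\begin{equation*}
\tilde\pi \bigl(f_*(u) - f_*(u_\pm)\bigr) = 0,
\end{equation*}
while $(I - \tilde\pi)$ applied to the equation retains a genuine differential relation. Assumption~\ref{goodred}(ii), namely $\ker \pi_*(u_0) df_*(u_0) \cap \ker b_*(u_0) = \{0\}$, is exactly the invertibility hypothesis needed to solve this constraint by the implicit function theorem for the component of $u$ transverse to the slow subspace, thereby turning \eqref{intvprof} into a smooth nondegenerate ODE on an $n$-dimensional invariant submanifold through $u_0$.

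Next I would apply the center manifold reduction of \cite{Pe}. By Assumption~\ref{profass}, the linearization at $u_0$ of the reduced ODE has a simple zero eigenvalue with eigenvector $r(u_0)$; the remaining spectrum is hyperbolic, using (gc) from Proposition~\ref{redconds} to rule out additional neutral modes. Hence near $u_0$ there is a smooth one-dimensional center manifold tangent to $r(u_0)$ containing every small bounded solution, in particular every candidate heteroclinic. Parametrizing it by $\xi\in\RR$, the reduced flow takes the Burgers-type form
\begin{equation*}
\xi' = \beta \xi^2 + O(\xi^3) - c,
\end{equation*}
with $\beta\neq 0$ by the genuine nonlinearity $\nabla\alpha\cdot r(u_0) < 0$. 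A standard phase-plane analysis then produces, for each sufficiently small $\eps$ and direction close to $r(u_0)$, a unique-up-to-translation heteroclinic $\bar u_{NS}$ with endpoints $u_\pm$ lying on a smooth $n$-dimensional manifold $\cS\ni(u_0,u_0)$. The Lax classification (sum $n+1$) follows from a stable/unstable dimension count at $u_\pm$ for $df_*$, the extra dimension being contributed by the slow scalar eigenvalue on the center manifold.

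The sharp bounds \eqref{NSbds} come from linearizing the scalar center-manifold ODE at the endpoints $\xi_\pm$: the slow eigenvalue has size $|\alpha(u_\pm) - \alpha(u_0)| \sim \eps$ of the correct sign, giving decay rate $\theta\eps$ on the stable/unstable manifolds; each additional $x$-derivative costs a factor of $\eps$ because $\bar u_{NS}' = O\bigl(\eps(\bar u_{NS} - u_\pm)\bigr)$ along the reduced trajectory. The main obstacle is the very first step: the matrix $b_*$ is neither invertible nor of constant rank in a way compatible with ODE reduction, so \eqref{intvprof} is not an ODE a priori. Both halves of Assumption~\ref{goodred} must be used in a coordinated way to peel off the algebraic constraints and expose a nondegenerate reduced system; once this is accomplished, the remaining steps are a by-now standard application of center manifold theory and scalar weak shock analysis.
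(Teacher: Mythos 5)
Your proposal is correct and follows essentially the route the paper itself indicates: the paper does not prove Proposition~\ref{NSprofbds} in detail but explicitly attributes it to the center-manifold analysis of \cite{Pe} under conditions (i)--(ii) of Assumption~\ref{goodred} (as discussed in \cite{MaZ5}), and your reduction of \eqref{intvprof} via the constant left kernel of $b_*$, the solvability of the resulting algebraic constraint from Assumption~\ref{goodred}(ii), and the scalar Burgers-type flow on the one-dimensional center manifold is exactly that argument (it also mirrors the linearized version the paper carries out in the proof of Proposition~\ref{uprop}). The Lax-type count and the $\eps^{k+1}e^{-\theta\eps|x|}$ bounds are obtained as you describe.
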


We denote by 
 $\cS_+$  the set  of $(u_-, u_+) \in \cS $  with  amplitude $\eps:=|u_+ -u_-| > 0$ 
sufficiently small  and direction $(u_+-u_-)/\eps $ sufficiently close
to $r(u_0)$ such that the profile $\bar u_{NS}$ exists.

Given $(u_-, u_+) \in \cS_+  $ with associated profile $\bar u_{NS}$, 
we define $\bar v_{NS} $ by \eqref{NSv} and 
    \begin{equation}
    \label{NSU}
    \bar U_{NS} := (\bar u_{NS}, \bar v_{NS}).
    \end{equation}
    It  is an approximate solution of \eqref{intprof} in the following sense: 

\begin{cor}\label{redbds}
For   $(u_-, u_+) \in \cS_+$, 
\begin{equation} 
\label{exactfeq}
 A_{11} \bar u_{NS}  + A_{12} \bar v_{NS} - f_*(u\pm) = 0
 \end{equation} 
 and 
 $$
 \cR_v  := A_{21}\bar u_{NS}' + A_{22}\bar v_{NS}'-q(\bar u_{NS},\bar v_{NS})  
$$ 
satisfies 
\begin{equation}\label{eq:resbds}
|   \D_x^k \cR_v  (x) |  \le  C_k \eps^{k+3}e^{-\theta \eps|x|} , \quad x\gtrless 0
\end{equation}
where $C_k$   is  independent of $(u_-, u_+) $ and  $\eps=|u_+ -u_-|$. 
 \end{cor}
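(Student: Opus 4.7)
The first claim \eqref{exactfeq} will be an immediate algebraic consequence of the definitions. My plan is to substitute $\bar v_{NS} = v_*(\bar u_{NS}) + c_*(\bar u_{NS})\bar u_{NS}'$ from \eqref{NSv} into $A_{11}\bar u_{NS} + A_{12}\bar v_{NS}$, recognize $A_{11}\bar u_{NS} + A_{12}v_*(\bar u_{NS}) = f_*(\bar u_{NS})$ and $A_{12}c_*(\bar u_{NS}) = -b_*(\bar u_{NS})$ by \eqref{cevalues}--\eqref{bstar}, and conclude via the profile equation \eqref{NS}. Differentiating the resulting identity once in $x$ will then yield the useful exact relation
$$
(A_{11} + A_{12}\, dv_*(\bar u_{NS}))\,\bar u_{NS}' \;=\; -\,A_{12}\,(c_*(\bar u_{NS})\bar u_{NS}')',
$$
which I expect to use repeatedly below.

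For the bound on $\cR_v$, my plan is to Taylor expand $q(\bar u_{NS},\bar v_{NS})$ around the equilibrium $(\bar u_{NS}, v_*(\bar u_{NS}))$ where $q$ vanishes, giving
$$
q(\bar u_{NS},\bar v_{NS}) = \partial_v q(\bar u_{NS}, v_*(\bar u_{NS}))\,(\bar v_{NS}-v_*(\bar u_{NS})) + O(|\bar v_{NS} - v_*(\bar u_{NS})|^2),
$$
then substitute $\bar v_{NS}-v_*(\bar u_{NS}) = c_*(\bar u_{NS})\bar u_{NS}'$ and use the defining identity for $c_*$ in \eqref{cstar} to rewrite $\partial_v q\cdot c_*$ as $A_{21} + A_{22}\,dv_* - dv_*(A_{11} + A_{12}\,dv_*)$. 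After expanding $A_{22}\bar v_{NS}' = A_{22}\,dv_*(\bar u_{NS})\bar u_{NS}' + A_{22}(c_*(\bar u_{NS})\bar u_{NS}')'$ and assembling $\cR_v$, I expect the terms linear in $\bar u_{NS}'$ with coefficient $A_{21}$ and $A_{22}dv_*$ to cancel algebraically, and the differentiated form of \eqref{exactfeq} above will allow me to convert the leftover $dv_*(A_{11}+A_{12}dv_*)\bar u_{NS}'$ term into one involving $(c_*\bar u_{NS}')'$. The net outcome should be the clean formula
$$
\cR_v = [A_{22} - dv_*(\bar u_{NS})A_{12}]\,(c_*(\bar u_{NS})\bar u_{NS}')' + O(|c_*(\bar u_{NS})\bar u_{NS}'|^2).
$$
The key observation, and the only nontrivial point in the whole argument, is that \emph{all} linear contributions in $\bar u_{NS}'$ cancel; this is the structural cancellation built into the definition of $c_*$ and is the whole content of the Chapman--Enskog ansatz.

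The final step will be to invoke the bounds \eqref{NSbds} from Proposition \ref{NSprofbds}. Since $|\bar u_{NS}'|\le C\eps^2 e^{-\theta\eps|x|}$ and $|\bar u_{NS}''|\le C\eps^3 e^{-\theta\eps|x|}$, the derivative $(c_*(\bar u_{NS})\bar u_{NS}')'$ is controlled by $C\eps^3 e^{-\theta\eps|x|}$ (the $(\bar u_{NS}')^2$ cross-term from the chain rule being even better, of size $\eps^4 e^{-2\theta\eps|x|}$), and the quadratic remainder by $C\eps^4 e^{-2\theta\eps|x|}$; hence $|\cR_v|\le C\eps^3 e^{-\theta\eps|x|}$. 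Each additional $x$-derivative will cost one extra factor of $\eps$ by \eqref{NSbds}, producing \eqref{eq:resbds}. No step is technically delicate; once the structural cancellation above is isolated, the estimate is just bookkeeping with the exponential bounds of Proposition \ref{NSprofbds}.
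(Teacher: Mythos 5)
Your proposal is correct and follows essentially the same route as the paper's proof: Taylor expansion of $q$ about the equilibrium $(\bar u_{NS},v_*(\bar u_{NS}))$, the structural cancellation coming from the definition of $c_*$, conversion of the leftover $dv_*(A_{11}+A_{12}dv_*)\bar u_{NS}'$ term via the differentiated profile equation (equivalently, via differentiating \eqref{exactfeq}), and then the bounds \eqref{NSbds}. Your version merely makes the cancellation explicit where the paper tracks it through $O(\cdot)$ remainders; the substance is identical.
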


\begin{proof}
Given the  choice of $\bar v_{NS}$, the first equation is a rewriting of the profile equation  
\eqref{NS}.  

Next, note that 
$$
 \bar v_{NS}  - v_* (\bar u_{NS}) = O  (| \bar u'_{NS} | ) , 
 \quad  \big( \bar v_{NS}  - v_* (\bar u_{NS})\big)'   = O  (| \bar u''_{NS} | ) + O  (| \bar u'_{NS} |^2 ), 
$$
where here  $O ( \cdot )$ denote  smooth functions of $\bar u_{NS}$ and its derivatives, 
which vanish as indicated. With similar notations, 
the Taylor expansion of $q$ and the definition of $\bar v_{NS}$ show that 
$$
\begin{aligned}
\cR_v       = &  O(| \bar v_{NS} -v_*(\bar u_{NS})|^2) + O(|(\bar v_{NS} -v_*(\bar u_{NS}))'|)
\\
& +   d v_* (\bar u_{NS}   \big(A_{11} + A_{12}  dv_* (\bar u_{NS}) \big )  \bar  u'_{NS}  . 
\end{aligned}
$$
Moreover, 
$$
\begin{aligned}
\big(A_{11} + A_{12}  d v_* (\bar u_{NS}) \big)  \bar  u'_{NS}  &= \big(  f_* (\bar u_{NS}) \big)' 
=   \big( b_* (\bar u_{NS}) \bar u'_{NS}) \big)'   
   \\
   & = O(|\bar u_{NS}'|^2) +    O(|\bar u_{NS}''|). 
\end{aligned}
$$
 This implies that  
 $$
\cR_v =   O(|\bar u_{NS}'|^2) +    O(|\bar u_{NS}''|).  
$$
  satisfies the estimates stated in \eqref{eq:resbds}. 
\end{proof}

\begin{rem}\label{correctorrmk}
\textup{
One may check that 
if we did not include the correction from equilibrium on the righthand
side of \eqref{NSv}, taking instead the simpler prescription
$\bar v_{NS} =v_*(\bar u_{NS})$ as in \cite{LY}, then 
the residual error that would result in \eqref{exactfeq} would be
too large for our later iteration scheme to close.
This is a crucial difference between our analysis and the analysis
of \cite{LY}.
}
\end{rem}

 %

\section{Statement of the main theorem}\label{results}

We are now ready to state the main result.
Define a base state $U_0=(u_0,v_*(u_0))$ and a
neighborhood $\cU=\cU_*\times \cV$.   

\begin{theo}\label{main}
Let Assumptions (SS), (GC), and  \ref{goodred} hold on the
neighborhood $\cU$ of $U_0$, with $Q\in C^{\infty}$. 
Then, there are $\eps_0 > 0$  and 
$\delta > 0$ such that for $(u_-, u_+) \in \cS+$ with  amplitude $\eps:=|u_+-u_-| \le \eps_0$,   the standing-wave equation 
\eqref{relax} has a solution   
$\bar U$ in $\cU$, 
 with associated Lax-type 
equilibrium shock $(u_-,u_+)$, satisfying for all $k  $: 
\begin{equation}\label{finalbds}
\begin{aligned}
\big|\partial_x^k (\bar U- \bar U_{NS})\big|
&\le C_k \eps^{k+2}e^{-\delta  \eps|x|},\\
|\partial_x^k (\bar u-u_\pm)|&\le C_k \eps^{k+1}e^{-\delta \eps|x|},
\quad x\gtrless 0,\\
\big|\partial_x^k (\bar v-v_*(\bar u)\big|
&\le C_k \eps^{k+2}e^{-\delta  \eps|x|},\\
\end{aligned}
\end{equation}  
where $\bar U_{NS}=(\bar u_{NS}, \bar v_{NS})$ is the 
approximating Chapman--Enskog profile defined in \eqref{NSU}, and
$C_k$ is independent of  $\eps$. 
Moreover, up to translation, this solution is unique
within a ball of radius $c\eps$ about $\bar U_{NS}$ in norm 
$\|\cdot\|_{L^2}+\eps^{-1}\|\D_x \cdot\|_{L^2}
+ \eps^{-2}\|\D_x^2 \cdot\|_{L^2} $, for $c>0$ sufficiently small.
(For comparison, $\bar U_{NS}-U_\pm$ is order $\eps^{1/2}$ in this norm,
by \eqref{finalbds}(ii)--(iii).)
\end{theo}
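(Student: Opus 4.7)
\medskip

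\noindent\textbf{Proof proposal.} The plan is to write $\bar U = \bar U_{NS} + W$ with correction $W=(w_u,w_v)$ and solve for $W$ by a contraction mapping argument in an appropriately weighted $H^s$ space, using a stationary analogue of the macro--micro decomposition. Subtracting \eqref{NS}--\eqref{NSv} from \eqref{relax}, using \eqref{intprof}, \eqref{exactfeq}, and Taylor expanding, one obtains an equation of the schematic form
\begin{equation*}
A W' - dQ(\bar U_{NS})\,W \;=\; \cN(\bar U_{NS},W) \;-\; \cR,
\end{equation*}
where $\cR=(0,\cR_v)$ is the Chapman--Enskog residual of Corollary~\ref{redbds}, of size $\eps^{k+3}e^{-\theta\eps|x|}$ in the $k$-th derivative, and $\cN$ is at worst quadratic in $W$ with coefficients smooth in $\bar U_{NS}$. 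The goal is therefore to invert the linearized operator $\cL:=A\D_x - dQ(\bar U_{NS})$ in weighted norms and close a fixed-point iteration on a ball of radius $\sim \eps^2$ around~$0$.

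The heart of the argument is the linear estimate for $\cL W = F$. I would decompose $W$ into a macroscopic part $w_u\in\RR^n$ (tangent to the equilibrium manifold, evolving on the slow $\eps$-scale) and a microscopic part $z := w_v - dv_*(\bar u_{NS})\,w_u - (\text{corrector involving }w_u')$ chosen so that projecting the equation onto $\ker dQ$ yields a reduced second-order ODE for $w_u$ of Chapman--Enskog type,
\begin{equation*}
(b_*(\bar u_{NS})w_u')' - (df_*(\bar u_{NS})w_u)' \;=\; (\text{source involving }F,z),
\end{equation*}
while the orthogonal projection gives a relaxation-dominated equation of the form
\begin{equation*}
A_{22}z' - \D_v q(\bar U_{NS})\,z \;=\; (\text{source involving }F,w_u,w_u').
\end{equation*}
On the reduced equation I would apply sharp ODE bounds along the lines of \cite{MaZ5, Pe}: Assumption~\ref{goodred} makes \eqref{intvprof} nondegenerate after reduction to $u_2$, and combined with the genuine nonlinearity of Assumption~\ref{profass} and Proposition~\ref{NSprofbds} one gets inverse bounds in $L^2(e^{\delta\eps|x|}dx)$ with a gain of a power of $\eps^{-1}$ reflecting the slow scale. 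For the micro component $z$, I would use a Kawashima-type weighted $H^s$ energy estimate based on the symmetrizer $S$ and the skew-symmetric compensator $K$ from \eqref{K}: the bound $\Re(KA-S\,dQ)\ge\theta$ together with the spectral gap in $\partial_v q$ gives control of $\|z\|_{H^s}$ plus $\|z'\|_{H^s}$ in terms of the source, uniformly in $\eps$ and in the dimension of $v$.

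Combining these two pieces via the standard Liu--Yu bookkeeping, one gets a single linear estimate of the form $\|W\|_{X_\eps} \le C\|F\|_{Y_\eps}$ in norms compatible with the scaling $\|\cdot\|_{L^2}+\eps^{-1}\|\D_x\cdot\|_{L^2}+\eps^{-2}\|\D_x^2\cdot\|_{L^2}$ appearing in the uniqueness statement, with an extra exponential weight $e^{\delta\eps|x|}$. Since $\cR$ has size $O(\eps^{5/2})$ in $Y_\eps$ after integration (each derivative costing a factor $\eps$ but gaining a factor $\eps^{1/2}$ from the $L^2$ weight) and $\cN(W)=O(\|W\|_{X_\eps}^2)$, the map $W\mapsto \cL^{-1}(\cN(W)-\cR)$ contracts on a ball of radius $c\eps^2$ for $\eps$ small enough; its fixed point is the sought $W$, with the exponential decay \eqref{finalbds} read off from the weight and uniqueness within the ball \emph{a fortiori} from the contraction. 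The higher derivative bounds follow by differentiating the equation and iterating.

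The main obstacle I anticipate is the linear estimate, specifically the sharp tracking of $\eps$ through the macro/micro coupling: the reduced ODE for $w_u$ gains $\eps^{-1}$ but its source contains $z$ and derivatives of $z$, while the energy estimate for $z$ has a source involving $w_u'$, so one must verify that the loop closes with a net gain before applying contraction. A secondary subtlety is that $b_*$ in \eqref{bstar} is only positive semidefinite (Assumption~\ref{goodred}(i)), so the reduced equation is not strictly parabolic and the ODE analysis must be done on the coordinates $(u_1,u_2)=(\pi_* u,(I-\pi_*)u)$ as in \eqref{intvprof}; Remark~\ref{rem31} hints that this is precisely the point where the particular choice of $b_*$ matters. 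Everything else, including the residual bound, is supplied by Corollary~\ref{redbds} and Proposition~\ref{NSprofbds}.
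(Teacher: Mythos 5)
Your strategy is essentially the paper's: perturb about $\bar U_{NS}$, separate a fluid variable $u$ from an internal variable $\tilde v$ (the linearization of $v-v_*(u)$), control the internal part by a Kawashima-type weighted energy estimate, control $u$ through a linearized Chapman--Enskog ODE with an $\eps^{-1}$ loss, and contract on a ball of size $O(\eps^2)$ using the $O(\eps^3)$ residual of Corollary \ref{redbds}. But three of the points you gloss over or explicitly leave open are where the actual work lies. First, $\cL:=A\D_x-dQ(\bar U_{NS})$ is not injective: by translation invariance its kernel is approximately spanned by $\bar U_{NS}'$, and concretely the reduced fluid ODE contains a slow scalar mode $z_0'-\eps\mu(x)z_0$ with $\mu_->0>\mu_+$ (the Lax structure), which is surjective with a one-dimensional kernel and is precisely where the $\eps^{-1}$ loss comes from. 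Writing $\cL^{-1}$ is therefore not legitimate; one must construct a right inverse by adjoining a phase condition $\ell_\eps\cdot u(0)=0$, and the ``unique up to translation'' clause of the theorem then needs a separate argument showing that any nearby solution can be translated to satisfy that condition (using $\ell_\eps\cdot\bar u_{NS}'(0)\ne 0$). Your ``uniqueness a fortiori from the contraction'' only yields uniqueness under whatever normalization you silently imposed. Relatedly, the paper inverts the \emph{integrated} system \eqref{intpert} rather than the differentiated one, to avoid a zero-mass constraint on the first component; your reduced equation is correspondingly second-order where the paper's \eqref{intpertu} is first-order.

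Second, the macro--micro loop that you flag as your ``main obstacle'' is not closed in your proposal, and closing it requires a specific observation: the source of the reduced $u$-equation involves $\tilde v'$, not $\tilde v$, and the second-order energy estimate shows $\tilde v'$ is controlled by \emph{derivatives} of the data plus $\eps^2\|u\|$ --- one full factor of $\eps$ better (in the scaled norms) than $\tilde v$ itself. Feeding this into the $\eps^{-1}$-losing ODE inverse gives $\eps\|u\|\lesssim\|F\|_{H^1_{\eps,\delta}}+\eps^2\|u\|$, which absorbs for small $\eps$; without isolating $\tilde v'$ the loop does not visibly close. Third, because hypothesis \eqref{nondeg} is dropped, $A$ may be singular, so the linearized equation is a degenerate ODE and \emph{existence} of solutions (as opposed to a priori bounds) is not automatic; the paper obtains it by a vanishing-viscosity regularization $-\eta\D_x^2$, a conjugation to constant coefficients, and a count of stable/unstable dimensions of the limiting coefficient matrices. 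Your proposal supplies only estimates and takes linear solvability for granted.
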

 
Bounds \eqref{finalbds} show that (i) the behavior of profiles
is indeed well-described by the Navier--Stokes approximation,
and (ii) profiles indeed satisfy the exponential decay rates
required for the proof of spectral stability in \cite{MaZ3}.
From the second observation, we obtain immediately from
the results of \cite{MaZ3} the following stability result,
partially generalizing that of \cite{LY} 
for the Boltzmann equations.\footnote{
Liu and Yu prove the stronger result of 
linearized stability with respect to zero-mass perturbations
that are sufficiently small in an appropriate norm.}

\begin{cor}[\cite{MaZ3}]\label{MaZcor}
Under the assumptions of Theorem \ref{main}, the resulting
profiles $\bar U$ are spectrally stable for amplitude $\eps$
sufficiently small, in the sense that the linearized operator
$L:= \partial_x A(\bar U) -dQ(\bar U)$ about $\bar U$
has no $L^2$ eigenvalues
$\lambda$ with $\Re \lambda \ge 0$ and $\lambda \ne 0$.
\end{cor}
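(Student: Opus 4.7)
The plan is to invoke the conditional spectral stability result of \cite{MaZ3} as a black box, reducing the entire task to verifying that its hypotheses are met for the profiles just constructed in Theorem \ref{main}. In other words, no new analysis of the linearized operator $L = \partial_x A(\bar U) - dQ(\bar U)$ is needed; the work consists of matching what we have proved to what \cite{MaZ3} already assumed.

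First I would recall what exactly is established in \cite{MaZ3}: for semilinear relaxation systems satisfying (SS), (GC), and Assumption \ref{goodred}, together with the genuine nonlinearity condition Assumption \ref{profass}, any small-amplitude Lax-type standing-wave profile $\bar U$ that obeys sharp $\eps$-dependent exponential decay bounds of the form $|\partial_x^k(\bar U - U_\pm)| \le C_k \eps^{k+1} e^{-\delta \eps |x|}$ for $x \gtrless 0$, and whose distance from the Chapman--Enskog approximation satisfies the refined estimate $|\partial_x^k(\bar U - \bar U_{NS})| \le C_k \eps^{k+2} e^{-\delta\eps |x|}$, is spectrally stable in the $L^2$ sense stated in the corollary. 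The resolvent/Kawashima-type estimates used in \cite{MaZ3} are carried out conditionally on the existence of such a profile; the verification of that hypothesis was the missing ingredient, explicitly flagged in the introduction of the present paper.

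Next I would verify each hypothesis of that conditional theorem against what is now in hand. The structural assumptions (SS), (GC), Assumption \ref{goodred}, and Assumption \ref{profass} are precisely those imposed in Theorem \ref{main} and hence are available throughout. The Lax classification of the underlying equilibrium shock is recorded in Proposition \ref{NSprofbds}, and it transfers to $\bar U$ through the proximity estimate \eqref{finalbds}, since the end states of $\bar U$ coincide with those of $\bar U_{NS}$ and the dimensions of the stable/unstable subspaces at $U_\pm$ are unchanged. The sharp exponential decay of $\bar U$ to $U_\pm$ and the higher-order closeness $\bar U - \bar U_{NS} = O(\eps^{k+2} e^{-\delta\eps|x|})$ are exactly the content of \eqref{finalbds}. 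Thus every input to the spectral stability theorem of \cite{MaZ3} is in place, and the conclusion follows immediately.

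The only real care-point, and where I would concentrate attention, is bookkeeping of the $\eps$-exponents: checking that the powers of $\eps$ and the decay rate $\delta\eps$ in \eqref{finalbds} are at least as sharp as what is required by the resolvent estimates of \cite{MaZ3}, both for the unperturbed exponential decay of $\bar U$ and for the error $\bar U - \bar U_{NS}$ (which enters as a perturbation of the Chapman--Enskog-based linearized operator). Since the very purpose of formulating \eqref{finalbds} with these specific exponents was to feed into \cite{MaZ3}, as emphasized in the introduction and in Remark \ref{correctorrmk}, I expect this verification to be mechanical rather than to present any genuine obstacle; it is essentially a matter of citation.
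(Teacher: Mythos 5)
Your proposal matches the paper's proof: Corollary \ref{MaZcor} is obtained precisely by invoking the conditional spectral stability theorem of \cite{MaZ3} as a black box and verifying its hypotheses from the bounds \eqref{finalbds} of Theorem \ref{main}. The only cosmetic difference is that the paper records those hypotheses concretely as the pointwise conditions \eqref{sizeder}--\eqref{dirder} (namely $|\bar U'|_{L^\infty}\le C|U_+-U_-|^2$, $|\bar U''|\le C|U_+-U_-|\,|\bar U'|$, and the requirement that $\bar U'/|\bar U'|$ be within $O(|U_+-U_-|)$ of the principal direction $R_0$), rather than as decay and Chapman--Enskog closeness estimates, but these follow immediately from \eqref{finalbds} exactly as your $\eps$-exponent bookkeeping anticipates.
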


The remainder of the paper is devoted to the proof of Theorem \ref{main}.

\begin{rem}
\textup{Theorem \ref{main} yields uniqueness only among solutions close
to the Chapman--Enskog approximant $\bar U_{NS}$.
The stability result of Liu--Yu \cite{LY} should give uniqueness
among solutions in a ball of small but $O(1)$ radius, assuming that
they have zero relative mass compared to $\bar U_{NS}$.
Indeed, it should be possible to upgrade this to general-mass perturbations
to obtain ultimately a full $O(1)$ uniqueness result.
Stability with respect to general-mass perturbations is an important
open problem.}
\end{rem}


\section{Outline of the proof}\label{outline}
 
\subsection{Nonlinear perturbation equations}
Defining the perturbation variable $U:= \bar U- \bar U_{NS}$,
and expanding about $\bar U_{NS}$,
we obtain from \eqref{intprof} the nonlinear perturbation equations
\begin{eqnarray}
  A_{11} u + A_{12} v  & = &0 
\\
  A_{21} u' + A_{22} v' -    d q (\bar U_{NS}) U & = & - \cR_v + N( U) 
\end{eqnarray} 
where the remainder $N(U) $ is  a smooth function of $U_{NS} $ and $U$, 
vanishing at second order at  $U =0$: 
\begin{equation}\label{Nbds}
N(U)= \cN(\bar U_{NS}, U)  = O(|U|^2). 
\end{equation}
We push the reduction a little further, using that 
\begin{equation}
\label{defM}
M := 
dq(\bar u_{NS}, \bar v_{NS})-
dq(\bar u_{NS}, v_*(u_{NS})) =
O(|\bar v_{NS}-v_*(\bar u_{NS})|). 
\end{equation}
 Therefore the equation reads 
 \begin{equation}
 \label{intpert}
 \begin{aligned}
\cL_*^\eps U:=&
\begin{pmatrix}0 & 0 \\ A_{21} & A_{22} 
\end{pmatrix}U'
+
\begin{pmatrix}A_{11} & A_{12} \\
- Q_{21}  & - Q_{22} \end{pmatrix}U
\\ 
= &
\begin{pmatrix} 0  \\ - \cR_v + MU + N(U)\end{pmatrix}
\end{aligned}
\end{equation}
where 
\begin{equation}
Q_{21} = \D_u q (\bar u_{NS}, v_* (\bar u_{NS})), 
\quad 
Q_{22} = \D_v q (\bar u_{NS}, v_* (\bar u_{NS})). 
\end{equation}

Differentiating the first line, it implies that 
\begin{equation}\label{pert} 
L_*^\eps U:=
AU'-dQ(\bar u_{NS},v_*(\bar u_{NS}))U=
\begin{pmatrix}   0  \\ - \cR_v + MU + N(U)\end{pmatrix}. 
\end{equation}
  
The linearized operator $A\partial_x - dQ(\bar U)$
about an exact solution $\bar U$ of the profile equations
has kernel $\bar U'$, by translation invariance, so is not invertible.
Thus, the linear operators $L_*^\eps $ and $\csL$ are not 
expected to be invertible,
and we shall see later that they are not.  
Nonetheless, one can check that $\csL$ is surjective in Sobolev spaces and    define a right inverse
$\csL \csLd\equiv I$, or solution operator
$(\cL_*^\eps)^\dagger$ of the equation 
\begin{equation}
\label{neweq}
\cL_*^\eps U=F:=  \begin{pmatrix}f\\g\end{pmatrix}, 
\end{equation}  
as recorded by   Proposition~\ref{invprop} below. 
Note that $\sL$ is not surjective   because   the first equation requires 
a zero mass condition on the source term. This is why we solve 
the integrated equation \eqref{intpert} and not \eqref{pert}. 

To  define the partial inverse $\csLd$, we  specify  one solution of 
\eqref{neweq} by adding the co-dimension one  internal 
condition: 
\begin{equation}\label{phasecond}
\ell _\eps \cdot u(0)  =0 
\end{equation}
where $\ell_\eps$ is a certain unit vector to be specified below. 

\begin{rem}\label{ellchoice}
\textup{ There is a large flexibility in the choice of $\ell_\eps$.
Conditions like \eqref{phasecond}  are  known to fix the indeterminacy in the 
resolution of the linearized profile equation  from \eqref{NS}
and  it remains well adapted in the present context,  see section~\ref{linCEestimates} below. 
A possible choice,  would be to choose $\ell_\eps$ independent of 
$\eps$ and parallel to the left  eigenvector of $ df_* (u_0)$ for the eigenvalue $0$
(see  Assumption~\ref{profass}).    }
\end{rem}



\subsection{Fixed-point iteration scheme}

The coefficients  and the error term $\cR_v$ are smooth functions of 
$\bar u_{NS}$ and its derivative, thus behave like smooth functions of 
$ \eps x$. Thus, it is natural to solve the equations in spaces which reflect 
this scaling. We do not introduce explicitly the change of variables
$\tilde x = \eps x$, but introduce norms which correspond to the usual $H^s$ norms 
in the $\tilde x $ variable : 
\begin{equation}
\label{defnorm}
\|f \|_{H^s_\eps} =  
\eps^{\mez}  \|f \|_{L^2}+
\eps^{-\mez }\|\partial_x f\|_{L^2}+ \dots + 
\eps^{\mez-s}\|\partial_x^s f\|_{L^2}.
\end{equation}
We also introduce weighted spaces and norms, which encounter for the exponential 
decay of the source and solution: introduce the notations.  
\begin{equation}
\label{modx}
<x>:= (x^2+1)^{1/2}
\end{equation}
For  $\delta \ge 0$ (sufficiently small), we denote by $H^s_{\eps, \delta}$ the space of 
functions $f$ such that   $ e^{\delta  \eps <x>} f \in H^s$ equipped with the norm
\begin{equation}
\label{defwnorm}
\|f \|_{H^s_{\eps, \delta} } =   \eps^{\mez} \sum_{k \le s} \eps^{-k}  \|e^{\delta \eps <x>} \D_x^k f \|_{L^2}.
\end{equation}
 Note that for $\delta \le 1$, this norm is equivalent, with constants independent of $\eps$ and $\delta$, 
 to the norm
 $$
\|e^{\delta \eps <x>}  f \|_{H^s_\eps} . 
 $$

\begin{prop}\label{invprop}
Under the assumptions of Theorem \ref{main},  
there are  constants $C$,  $\eps_0 > 0 $ and $\delta_0 > 0$   
and  for all  $\eps \in ]0, \eps_0]$, there is a unit vector $\ell_\eps$ such that 
for $\eps \in ]0, \eps_0]$, $\delta \in [0, \delta_0]$, 
$f \in H^{3}_{\eps, \delta} $, $g \in H^{2}_{\eps, \delta} $ 
the operator equations \eqref{neweq} \eqref{phasecond} has a unique 
solution 
$ U \in H^{2}_{\eps, \delta} $, denoted by $ U = \csLd F$, which satisfies     
  \begin{equation}\label{invbdH2}
\big\|\csLd  F \big\|_{H^2_{\eps, \delta} }\le 
C\eps^{-1}\big( \big\| f \|_{H^{3}_{\eps, \delta} }
+ \big\|g  \big\|_{H^2_{\eps, \delta} }\big).
\end{equation}
 
 Moreover, for  $s \ge 3$, there is a constant $C_s$ such that for  
 $\eps \in ]0, \eps_0]$ 
and 
$f \in H^{s+1}_{\eps, \delta} $, $g \in H^{s}_{\eps, \delta} $ 
the  
solution 
$ U = \csLd F \in H^{s}_{\eps, \delta} $ and 
 \begin{equation}\label{invbdHs}
\big\|\csLd  F \big\|_{H^s_{\eps, \delta} }\le 
C\eps^{-1}\big( \big\| f \|_{H^{s+1}_{\eps, \delta} }
+ \big\|g  \big\|_{H^s_{\eps, \delta} }\big) + C_s  \big\|\csLd  F \big\|_{H^{s-1}_{\eps, \delta} } . 
\end{equation}

\end{prop}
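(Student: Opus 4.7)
My plan is to exploit a linearized macro--micro decomposition modeled on the Chapman--Enskog derivation of Section \ref{CEapprox}. Writing $U = (u,v)$ and setting $w := v - dv_*(\bar u_{NS})\,u$, the identity $\D_u q + \D_v q \cdot dv_* = 0$ at equilibrium turns $\csL U = F$ into
\begin{equation*}
df_*(\bar u_{NS})\,u + A_{12} w = f,\qquad
\tilde A_{21}(\bar u_{NS})\,u' + A_{22} w' - Q_{22} w = g,
\end{equation*}
modulo terms of order $\eps$ coming from $\bar u_{NS}'$ differentiating the coefficients, where $\tilde A_{21} := A_{21} + A_{22}\,dv_*(\bar u_{NS})$. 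Solving the second equation at leading order gives $w = c_*(\bar u_{NS})\,u' + \text{(corrections in }w' \text{ and }g\text{)}$, and substituting into the differentiated first equation yields a second-order ODE in $u$ that is exactly the linearization of the reduced profile equation \eqref{NS} about $\bar u_{NS}$:
\begin{equation*}
\bigl(df_*(\bar u_{NS})\,u - b_*(\bar u_{NS})\,u'\bigr)' = \D_x f - A_{12}Q_{22}^{-1} g + \text{(lower order).}
\end{equation*}

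\textbf{Fluid inversion.} This is the standard linearized viscous profile operator. Assumption \ref{goodred}(i)--(ii) allows one, via the decomposition $u = (\pi_* u,(I-\pi_*)u)$ used after \eqref{intvprof}, to recast it as a nondegenerate first-order ODE system of size $n+1$. Rescaling $\tilde x = \eps x$ yields an ODE with $O(1)$ coefficients whose endstates $u_\pm$ are hyperbolic rest points of Lax type (Proposition \ref{NSprofbds}), hence support exponential dichotomies with $O(1)$ rates. A standard dichotomy/Green's-function construction produces, modulo the one-dimensional translational kernel $\bar u_{NS}'$, a right inverse of operator norm $O(1)$ in rescaled $H^s(\tilde x)$, which translates to $O(\eps^{-1})$ in the unscaled norms \eqref{defnorm}. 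The phase condition $\ell_\eps \cdot u(0) = 0$, with $\ell_\eps$ chosen so that $\ell_\eps\cdot\bar u_{NS}'(0)\neq 0$ uniformly in $\eps$ (as in Remark \ref{ellchoice}), picks out the unique representative transverse to that kernel and gives uniqueness. The exponential weight $e^{\delta\eps\langle x\rangle}$ is harmless: conjugating by it shifts the dichotomy's asymptotic spectrum by $\pm\delta\eps$, preserving Lax hyperbolicity provided $\delta$ is small relative to $\theta$.

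\textbf{Recovery of $w$ and bootstrap.} With $u \in H^2_{\eps,\delta}$ in hand, the microscopic part $w$ is controlled by combining an algebraic inversion of $Q_{22}$ in the second equation with a Kawashima-type weighted energy estimate using the compensator $K$ of \eqref{K}; this converts the skew-dissipative coupling into a coercive quadratic form and yields $\|w\|_{H^2_{\eps,\delta}} \lesssim \|u\|_{H^2_{\eps,\delta}} + \|f\|_{H^3_{\eps,\delta}} + \|g\|_{H^2_{\eps,\delta}}$, proving \eqref{invbdH2}. For higher $s$ I would differentiate the system $s-2$ times and reapply the $H^2$ estimate: commutators land on $\bar U_{NS}$ and its derivatives, each carrying an extra factor $\eps$ that absorbs the $\eps^{-1}$ loss, while the remaining lower-derivative terms naturally collect into $\|\csLd F\|_{H^{s-1}_{\eps,\delta}}$, producing the tame bound \eqref{invbdHs}.

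\textbf{Main obstacle.} The subtle step is controlling the chain of substitutions $w \leftrightarrow u'$: each reduction back and forth between the algebraic first equation and the differential second equation costs one derivative on $f$ and occasionally on $g$, which is precisely what forces the asymmetric regularity $f \in H^{s+1}_{\eps,\delta}$, $g \in H^s_{\eps,\delta}$ in the hypothesis. Equally delicate is the uniform-in-$\eps$ choice of $\ell_\eps$: at $\eps = 0$ this reduces to the classical transversality of the translational mode in Lax shock theory, which is the same ingredient underlying Proposition \ref{NSprofbds} and so should transfer without obstruction to small $\eps > 0$.
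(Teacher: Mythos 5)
Your a priori estimate strategy is essentially the paper's: the micro variable $w=v-dv_*(\bar u_{NS})u$ is exactly the paper's $\tilde v=v+pu$, the Kawashima compensator controls $(U',\tilde v)$ in terms of $\eps\|u\|$, the linearized Chapman--Enskog reduction $\bar b_*u'-\bar{df}_*u=h$ is inverted with norm $O(\eps^{-1})$ using Assumption \ref{goodred} and the Lax dichotomy structure, the phase condition $\ell_\eps\cdot u(0)=0$ kills the translational kernel, and the tame higher-order bound comes from commutators carrying extra powers of $\eps$. Two small inaccuracies in that part: the coupling is circular (the fluid equation's source contains $\tilde v'$, which is only controlled \emph{after} $u$ is, so the two estimates must be combined and the cross terms absorbed, using that $\tilde v'$ enters with an extra factor of $\eps$ --- you cannot literally have ``$u$ in hand'' first); and the reduced operator becomes a nondegenerate ODE of size $\mathrm{rank}\,b_*$ in $\tilde u_2$ after solving the algebraic $u_1$-block, not a system of size $n+1$.

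The genuine gap is \emph{existence}. Your dichotomy/Green's-function construction produces a solution only of the scalar-reduced fluid ODE, not of the full system $\csL U=F$, which is a \emph{degenerate} first-order system: the first row of \eqref{inteqs6} is purely algebraic (no derivative of $u$ appears there when $A_{11},A_{12}$ act without the missing $A$-invertibility assumption \eqref{nondeg}), so one cannot write $U'=A^{-1}(\dots)$ and invoke standard ODE boundary-value theory. The paper devotes all of Section 8 to this point: it regularizes by artificial viscosity, $\csLe U=\csL U-\eta(u',v'')^{T}$, recasts the regularized problem as a nondegenerate first-order transmission problem on $x\gtrless 0$, proves hyperbolicity of the limiting coefficient matrices $\mA_\pm$ (Lemma \ref{lem82}, itself a nontrivial spectral computation via the Kawashima multipliers and a frequency-domain Chapman--Enskog reduction), counts stable/unstable dimensions against the $n+2r+1$ matching-plus-phase conditions, and then passes to the limit $\eta\to0$ using estimates uniform in $\eta$. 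None of this is replaced by anything in your proposal; asserting a right inverse from ``standard dichotomy theory'' begs precisely the question the paper is written to answer. To complete your argument you would need either to reproduce the vanishing-viscosity construction, or to set up an explicit linear fixed-point scheme (solve the fluid block, then the micro block, and contract using the $O(\eps)$ smallness of the coupling), and in either case to verify the dimension count that makes the boundary-value problem well posed.
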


The proof of this proposition  comprises most of the work of the paper.
Once it is established, existence follows by a straightforward
application of the Contraction-Mapping Theorem.
Defining 
\begin{equation}\label{Tdef}
\cT:=\csLd
\begin{pmatrix} 0 \\ - \cR_v + MU + N(U))\end{pmatrix},
\end{equation}
we reduce \eqref{pert} to the fixed-point equation
\begin{equation}\label{fixedeq}
\cT   U:=    U.
\end{equation}


\subsection{Proof of the main theorem}\label{pf}

\begin{proof}[Proof of Theorem \ref{main}]

The profile $\bar u_{NS}$ exists if $\eps$ is small enough. 
The estimates \eqref{NSbds}  imply that 
\begin{equation}
\label{NSbds2}
\| \bar u_{NS} - u_\pm \|_{H^s_{\eps, \delta}}  \le  C_s  \eps 
\end{equation}
with $C_s$ independent of $\eps$ and $\delta$, provided 
that  $\delta \le \theta / 2$. 
Similarly, \eqref{eq:resbds} implies that
\begin{equation}\label{L2resbds}
\| \cR_v\|_{H^s_{\eps, \delta}}\le C_s  \eps^{  3}, 
\end{equation}
and \eqref{defM} implies that 
\begin{equation}\label{Mbds}
\|  M  \|_{H^s_{\eps, \delta}}\le C_s  \eps^{  2}.  
\end{equation}

Moreover, with the choice of norms \eqref{defnorm}, the Sobolev inequality  reads
\begin{equation}
\label{sobemb}
\|  u \|_{L^\infty }  \le  C   \| u \|_{H^1_{\eps}} \le C   \| u \|_{H^1_{\eps, \delta}}
\end{equation}
with $C $ independent of $\eps$. 
Moreover,  for smooth functions $\Phi$, there are nonlinear estimates 
\begin{equation}
\label{nlsest}
\| \Phi (u)   \|_{H^s_{\eps}} \le     C\big(  \|  u \|_{L^\infty } \big)    \ \| u \|_{H^s_{\eps}} . 
\end{equation}
which also extend to weighted spaces,  for $\delta \le 1$: 
\begin{equation}
\label{nlwsest}
\| \Phi (u)   \|_{H^s_{\eps, \delta}} \le     C\big(  \|  u \|_{L^\infty } \big)    \ \| u \|_{H^s_{\eps, \delta}} . 
\end{equation}

In particular, this implies that  for $s \ge 1$, $\delta \le \min\{ 1, \theta/ 2 \}$ and 
$\eps$ small  enough:  
\begin{equation}\label{Mbds2}
\begin{aligned}
\|  M  U   \|_{H^s_{\eps, \delta}}& \le C  \big(  \|  M  \|_{H^1_{\eps, \delta}}     \| U \|_{H^s_{\eps, \delta}}
+    \|  M  \|_{H^s_{\eps, \delta}}     \| U \|_{H^1_{\eps, \delta}}    \big) 
\\
& \le     \eps^{ 2 }   \big(  C     \| U \|_{H^s_{\eps,\delta}}   +  C_s     \| U \|_{H^1_{\eps,\delta}} \big)   
\end{aligned}
\end{equation}
where the first constant $C$ is independent of $s$. 
Similarly, 
 \begin{equation}\label{Nbds2}
\|  N (  U)    \|_{H^s_{\eps, \delta}}\le    C\big(  \|  U \|_{L^\infty } \big)      
 \| U \|_{H^1_{\eps, \delta}}  
 \| U \|_{H^s_{\eps, \delta}}  . 
\end{equation}

Combining these estimates,  we find that
\begin{equation*}
\begin{aligned}
\|\cT U\|_{H^s_{\eps, \delta} } \le   
 \eps^{-1} \big( C_s   \eps^{3 }  +   C  \eps^{2  }    \|U\|_{H^s_{\eps, \delta} } 
 + C_s  \eps^{2  }    \|U\|_{H^1_{\eps, \delta} }  
+  C  \|U\|_{H^1_{\eps, \delta} } \|U\|_{H^s_{\eps, \delta} }  \big), 
\end{aligned}
\end{equation*}
that is
\begin{equation}\label{Tbd}
\begin{aligned}
\|\cT U\|_{H^s_{\eps, \delta} } \le   
  C_s   \eps^{2 }  +   C  ( \eps^{  } +  \eps^{-1}\| U \|_{H^1_{\eps, \delta}} )   \|U\|_{H^s_{\eps, \delta} } 
 + C_s  \eps    \|U\|_{H^1_{\eps, \delta} }  
\end{aligned}
\end{equation}
provided that $\eps \le \eps_0$,  $\delta \le \min\{1, \theta / 2\}$ and 
$\| U \|_{L^\infty} \le 1$.

\medbreak

Consider first the case $s = 2$. 
Then,   $\cT$  maps the   ball 
$\cB_{\eps, \delta} = \{  \| U \|_{H^{2}_{\eps, \delta}}\le  \eps^{1+\frac{1}{2}}\} $ 
    to itself,  if  $\eps \le \eps_1 $ where  $\eps_1 > 0$ is small enough. 
  Similarly, 
  \begin{equation}\label{dTbd}
\begin{aligned}
\|\cT U -\cT V\|_{H^2_{\eps, \delta}}   \le 
C\eps^{-1} \big(\eps^{2} +  \|U\|_{H^2_\eps}  +  \|V\|_{H^2_\eps}\big)  \|U-V\|_{H^2_{\eps, \delta}},\\
\end{aligned}
\end{equation}
provided that  $\| U \|_{L^\infty} \le 1$ and $\| V \|_{L^\infty} \le 1$, 
from which we readily find   
that, for $\eps>0$ sufficiently small,
$\cT$ is contractive on   $\cB_{\eps, \delta}$, whence, by the Contraction-Mapping Theorem,
there exists a unique solution $  U^\eps$ of  \eqref{fixedeq} 
in $\cB_{\eps, \delta}$ for $\eps$ sufficiently small.

Moreover, from the contraction property 
$$
\|\bar U^\eps-\cT(0)\|_{H^2_\eps}= 
\|\cT(\bar U^\eps)-\cT(0)\|_{H^2_\eps}\le
c \|\bar U^\eps \|_{H^2_\eps},
$$
with $ c <1$, we obtain as usual that
$\|  U^{\eps, \delta} \|_{H^2_{\eps, \delta} }\le C\|\cT(0)\|_{H^2_{\eps, \delta}}$,
whence 
\begin{equation}
\|    U^\eps\|_{H^2_{\eps, \delta}  }\le C\eps^{2}. 
\end{equation}
by \eqref{Tbd}.
In particular,  $e^{\eps \delta  \la x \ra } U^\eps =  O( \eps^{2})$ in $H^2_{\eps}$ 
and by the Sobolev embedding 
\begin{equation}
\|  e^{\eps \delta  \la x \ra } U^\eps \|_{L^\infty}   =  O( \eps^{2}), 
\quad \|  e^{\eps \delta  \la x \ra } \D_x U^\eps \|_{L^\infty}   =  O( \eps^{3}). 
\end{equation}

\medbreak 

For $s \ge 3$, the estimates \eqref{Tbd} show that for $\eps \le \eps_1$ independent 
of $s$, the iterates $\cT^n (0)$ are bounded in $H^{s}_{\eps, \delta}$,   
and similarly that $\cT^n(0) - \cT (0) = O (\eps^2)$ in $H^{s}_{\eps, \delta}$, 
implying that the limit $U $ belongs to  $H^{s}_{\eps, \delta}$ with norm 
$O(\eps^2)$. 
Together with the Sobolev inequality \eqref{sobemb}, this implies the pointwise estimates  
\eqref{finalbds}.

Finally, the assertion about uniqueness follows by
uniqueness in $\cB_{c\eps, \delta}$ for the choice $\delta=0$
and $c>0$ sufficiently small
(noting by our argument that also $\cB_{c\eps, \delta}$ is mapped to 
itself for $\eps$ sufficienty small, for any $c>0$),
together with the observation that phase condition
\eqref{phasecond} may be achieved for any solution $\bar U=\bar U_{NS}+U$
with 
$$
\|U\|_{L^\infty}\le c \eps^{2}<< \bar U_{NS}'(0)\sim \eps^2
$$
by translation in $x$, yielding
$\bar U_a(x):=\bar U(x+a)= \bar U_{NS}(x)+ U_a(x)$
with 
$$
U_a(x):= \bar U_{NS}(x+ a)-\bar U_{NS}(x)+ U(x+a) 
$$
so that
$ U_a(0) \sim (a+o(1))\bar U_{NS}'(0) $
and
$ \ell _\eps \cdot u_a(0)  \sim \ell_\eps \cdot \bar u_{NS}'(0)$,
which may be set to zero by appropriate choice of $a$, by
the property $\ell_\eps \cdot \bar u_{NS}'(0)\ne 0$
following from our choice of $\ell_\eps$ (see Remark \ref{ellchoice}).
\end{proof}

It remains to prove existence of the linearized solution
operator and the linearized bounds \eqref{invbdHs}, which
tasks will be the work of the rest of the paper.
We concentrate first on estimates, and prove the existence next, using 
a viscosity method.

%

\section{Internal and high frequency estimates}
 \label{energy}

We begin by establishing a priori estimates on solutions
of the equation \eqref{neweq}
This will be done in two stages.
In the first stage, carried out in
this section, we establish energy estimates
showing that ``microscopic'', or ``internal'', variables consisting
of $v $ and derivatives of $(u, v)$ are controlled by  and small 
with respect to the ``macroscopic'', or ``fluid'' variable, $u$.
In the second stage, carried out in Section \ref{linCEestimates}, we
estimate the macroscopic variable $u$ by Chapman--Enskog approximation
combined with finite-dimensional ODE techniques such as have been
used in the study of fluid-dynamical shocks; see,
for example, \cite{MaZ4, MaZ5, Z1, Z2, GMWZ}.

\subsection{The basic $H^1$ estimate} 
  
  We consider the equation
  \begin{equation}
  \label{inteqs6}
  \cL_*^\eps U := \begin{pmatrix} A_{11} u + A_{12} v  
  \\
   A_{21} u' + A_{22} v'  -   dq (\bar u_{NS} , v_* (\bar u_{NS}))  U \end{pmatrix} = 
  \begin{pmatrix} f \\ g \end{pmatrix}  
  \end{equation}
   and its differentiated form: 
\begin{equation}\label{apriorieq}
AU'- dQ(\bar u_{NS}, v_*(\bar u_{NS}))U=
\begin{pmatrix} f'\\g \end{pmatrix}.
\end{equation}
The internal variables are $U' = (u', v')$ and $\tilde v$ where 
\begin{equation}
\label{tildev}
\tilde v:= 
v   +    p   u  , \qquad 
p =  \partial_v q^{-1}\partial_uq (\bar u_{NS}, v_*(\bar u_{NS})) =  
-  dv_* (\bar u_{NS})
\end{equation} 
is the linearized version of $\bar v-v_*(\bar u)$.

\begin{prop}\label{energypropL2}
Under the assumptions of Theorem \ref{main}, for  there  are  constants 
$C$, $\eps_0 > 0$ and $\delta_0 > 0$ such that for  $0 < \eps \le \eps_0$ and 
$0 \le \delta \le \delta_0$, 
$f \in H^{2}_{\eps, \delta} $, $g \in H^{1}_{\eps, \delta} $ 
and      $U= (u,v)\in H^1_{\eps, \delta}$ of \eqref{inteqs6} satisfies
  \begin{equation}\label{invbd}
\big\| U'   \big\|_{L^2_{\eps, \delta} }   + \big\| \tilde v   \big\|_{L^2_{\eps, \delta} }  \le 
C    \big( \big\| (f, f', f'', g, g') \|_{L^2_{\eps, \delta} }
 + \eps  \big\| u  \big\|_{L^2_{\eps, \delta} }  \big).
\end{equation}

\end{prop}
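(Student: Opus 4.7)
The plan is a Kawashima-type energy estimate applied to the differentiated equation \eqref{apriorieq}, exploiting the simultaneous symmetrizability (SS) and genuine coupling (GC) assumptions in their stationary form.

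First I would reduce $\tilde v$ to an algebraic relation. The implicit function identity $Q_{21}+Q_{22}dv_*=0$ forces $dQ(\bar u_{NS},v_*(\bar u_{NS}))\cdot U=(0,Q_{22}\tilde v)^T$, so the lower block of \eqref{apriorieq} reads
\begin{equation*}
A_{21}u' + A_{22}v' - Q_{22}\tilde v = g.
\end{equation*}
Uniform invertibility of $Q_{22}=\partial_v q(\bar u_{NS},v_*(\bar u_{NS}))$ from (H3) immediately yields
$$\|\tilde v\|_{L^2_{\eps,\delta}}\le C\bigl(\|U'\|_{L^2_{\eps,\delta}}+\|g\|_{L^2_{\eps,\delta}}\bigr),$$
so the task reduces to bounding $\|U'\|_{L^2_{\eps,\delta}}$.

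To estimate $\|U'\|$, I would combine two weighted energy identities against $e^{2\delta\eps\langle x\rangle}$. Pairing \eqref{apriorieq} with $S(\bar u_{NS})U$ and integrating by parts, the symmetry $(SA)^T=SA$ leaves only a coefficient commutator $(SA)'=S_u\bar u_{NS}'=O(\eps^2)$ (using \eqref{NSbds}) and a weight commutator of size $O(\delta\eps)$, while $-\langle SdQ\cdot U,U\rangle\ge\theta|\tilde v|^2$ controls $\tilde v$. Pairing instead with the Kawashima multiplier $-K(\bar u_{NS})U'$, where $K$ is skew with $\Re(KA-SdQ)\ge\theta I$ from Assumption \ref{assGC}, yields after use of $K^T=-K$ the symmetric contribution $\int\langle\Re(KA)U',U'\rangle$. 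Combining the two identities with a suitable small relative weight and applying the pointwise coercivity $\Re(KA-SdQ)\ge\theta I$ to $U'$ gives
\begin{equation*}
\|U'\|_{L^2_{\eps,\delta}}^2+\|\tilde v\|_{L^2_{\eps,\delta}}^2\le C\,\|(f,f',f'',g,g')\|_{L^2_{\eps,\delta}}^2+C\eps^2\,\|U\|_{L^2_{\eps,\delta}}^2
\end{equation*}
modulo controllable lower-order terms. Using $v=\tilde v-pu$ gives $\|U\|^2\le C\|u\|^2+2\|\tilde v\|^2$, so the $\|\tilde v\|^2$ piece on the right is absorbed into the left for $\eps$ small, and \eqref{invbd} follows.

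The main obstacle, in my view, is the careful treatment of the source cross-terms, in particular the contribution $\int\langle S(f',g),U\rangle$ from the $S$-identity: naive Cauchy--Schwarz would produce a dangerous $\|g\|\|u\|$ term with no $\eps$ smallness, destroying the scaling. Handling this requires (i) integration by parts transferring the derivative off $f'$ onto $U$ (producing $\|f\|\|U'\|$, absorbable into the LHS, plus an $O(\eps^2)\|f\|\|U\|$ commutator), and (ii) a further integration by parts on the $g$-contribution using the equation itself; this is precisely why the higher-derivative source norms $\|f''\|$ and $\|g'\|$ appear on the right-hand side of \eqref{invbd}. Once these cross-terms are tamed and the $O(\eps^2)$ and $O(\delta\eps)$ commutators absorbed for $\eps,\delta$ sufficiently small, the estimate closes.
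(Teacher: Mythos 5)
Your overall strategy (weighted Kawashima-type estimate using $S$ and $K$) is the right one and matches the paper's in spirit, and your opening observation that the lower block gives $\|\tilde v\|_{L^2_{\eps,\delta}}\le C(\|U'\|_{L^2_{\eps,\delta}}+\|g\|_{L^2_{\eps,\delta}})$ correctly reduces everything to bounding $\|U'\|$. But there is a structural gap in how you propose to bound $\|U'\|$. The coercivity supplied by genuine coupling is $\Re(KA-SdQ)\ge\theta$, and to extract $\theta\|U'\|^2$ from it you must produce \emph{both} quadratic forms $\langle \Re(KA)U',U'\rangle$ \emph{and} $-\langle SdQ\,U',U'\rangle$ evaluated at the \emph{same} object $U'$. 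Your $K$-identity (pairing with $KU'$) produces the first, but your $S$-identity (pairing with $SU$) produces only $-\langle SdQ\,U,U\rangle$, i.e.\ dissipation applied to $U$, which controls $\|\tilde v\|$ but not the dissipated component of $U'$. Since $\Re(KA)\ge\theta+SdQ$ and $SdQ$ is $O(1)$-negative off its kernel, the $K$-identity alone yields only
\begin{equation*}
\theta\|U'\|^2\ \lesssim\ \|(\tilde v)'\|^2+\|\tilde v\|\,\|U'\|+|\langle KF,U'\rangle|,
\end{equation*}
and $\|(\tilde v)'\|$ is controlled by none of your identities (the algebraic relation differentiated would bring in $\|U''\|$). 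No choice of relative weight between your two identities can repair this mismatch of orders.

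The missing ingredient is a second-order multiplier: in the paper the symmetrizer is $\cS=\partial_x^2\circ S+\partial_x\circ K-\lambda S$, and it is precisely the $\partial_x^2\circ S$ piece (equivalently, pairing the once-differentiated equation with $SU'$) that produces the term $-\partial_x\circ SQ\circ\partial_x$, i.e.\ $-\langle SdQ\,U',U'\rangle$, which combines with the Kawashima piece to give $\langle(\Re KA-SQ)U',U'\rangle\ge\theta\|U'\|^2$. Adding this third identity, with the coefficient and weight commutators of size $O(\eps^2)$ and $O(\delta\eps)$ treated as you indicate (and after the preliminary change of variables $U=P\widetilde U$ making $\widetilde Q$ block-diagonal, which is what makes the source pairing $\langle S_{22}g,\tilde v\rangle$ harmless rather than a dangerous $\|g\|\,\|u\|$ term), your argument closes and coincides with the paper's proof.
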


Making the change of variables $v  \mapsto \tilde v$,  or $U \mapsto  \widetilde U = (u, \tilde v)$ and denoting 
$U=P(\bar u_{NS})\widetilde U$, we obtain an ODE
\begin{equation}\label{symmode}
\widetilde  A \widetilde U'-\widetilde{ Q}\, \widetilde U= \widetilde F + \widetilde  C \widetilde U,
\end{equation}
where 
\begin{equation}
\label{hateq}
\widetilde A=P^{-1}AP,\quad  \widetilde { Q}=P^{-1}dQP=
\begin{pmatrix}
0 & 0 \\
0 &\widetilde{Q}_{22} \end{pmatrix},
\end{equation}
with $ \widetilde{Q}_{22}=  \partial_v q(\bar u_{NS}, v_*(\bar u_{NS})$, 
\begin{equation}
\label{hatF}
\widetilde F=
\begin{pmatrix}
f' \\
g+  \widetilde R_{21} f' 
\end{pmatrix},
\end{equation}
with $ \widetilde R_{21} =  \partial_v q^{-1}\partial_uq (\bar u_{NS}, v_*(\bar u_{NS})  $, 
and
\begin{equation}\label{Cest}
\widetilde  C=-P^{-1}AP'= 
\begin{pmatrix} \widetilde  C_{11} & 0 \\ \widetilde  C_{21} & 0 \end{pmatrix} = O(\bar u_{NS}')  =  \eps^2 \widehat C . 
\end{equation}
\medbreak
The equation \eqref{symmode} reads 
\begin{equation}
\label{simplfeqtilde}
\widetilde A \widetilde U'   -    \widetilde Q \widetilde  U = \widehat  F = 
\begin{pmatrix}
f'    +   \eps    h   \\
\tilde  g  
\end{pmatrix}
\end{equation}
with 
\begin{equation}
\label{def610}
h = \eps \widehat C_{11} u , \qquad \tilde g =  g +  \widetilde R_{21} f' + \eps^2 \widehat C_{21} u. 
\end{equation}

\medbreak

We first prove the estimate \eqref{invbd} for $\delta = 0$. 
Dropping hats and tildes, the  ODE reads
\begin{equation}
\label{simplfeq}
AU' -   Q  U = F, 
\quad
Q =
\begin{pmatrix}
0 & 0 \\
0 &   Q_{22}
\end{pmatrix},
\quad
F =
\begin{pmatrix}
f'    +   \eps    h   \\
g 
\end{pmatrix}. 
\end{equation}
The matrix $A = A(x)$ has end points values $A_\pm$ at $\pm \infty$ and satisfies 
estimates 
\begin{equation}
\label{estcoeff}
| \D_x^k (A - A_\pm)| \le C_k \eps^k 
\end{equation}
with $C_k$ independent of $\eps$. There are similar estimates for $Q_{22}$. 
Moreover, $A$ and $ {Q}$ are simultaneously symmetrizable
by some $  S  =  \widetilde  S(\bar u_{NS})$,
since this property is unaffected by coordinate changes.
$S$ is necessarily block-diagonal,  $SA$ and 
$$
SQ=
\begin{pmatrix}
0 & 0 \\
0 & q
\end{pmatrix}
$$
are symmetric with $q$ negative  definite. 
Likewise, the genuine coupling condition still holds,
which, by the results of \cite{K}, is equivalent to the 
{\it Kawashima condition}, and 
there is a smooth $  K = \widetilde K(\bar u_{NS})= - \widetilde K^* $ such that 
$
\Re  (  K   A  -    S  {Q}) 
$ is definite positive. Therefore, there is  $c > 0$ such that for all 
$\eps \le \eps_0$ and $x \in \RR$: 
\begin{equation}
\label{infbds}
\tilde  q \le   - c \Id, \qquad 
\Re (  K   A -   S  {Q}) \ge c\Id.
\end{equation}

\begin{lem}
\label{lem62}
There is a constant $C$ such that for $\eps$ sufficiently small, 
$f \in H^2$, $g \in H^1$, $h\in H^1$ and  $U\in H^1$  satisfying \eqref{simplfeq}, 
one has 
\begin{equation}\label{sharph1eq}
\| U'  \|_{L^2} + \| v \|_{L^2}  \le C \big(     
\|  f  \|_{H^2}   +   \| h \|_{H^1} + \| g  \|_{H^1}   
+ \eps \| u \|_{L^2} \big). 
\end{equation}
\end{lem}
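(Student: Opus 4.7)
The plan is a standard two-tier energy estimate adapted to the stationary ODE: the symmetrizer $S$ furnishes dissipation on the micro-component $v$, while the Kawashima compensator $K$ converts the genuine coupling condition $\Re(KA-SQ)\ge c\Id$ into control of the full derivative $U'$. All variable-coefficient corrections arising from differentiating $A(x),S(x),K(x),Q(x)$ are $O(\eps)$ by \eqref{estcoeff}, hence absorbable for $\eps$ small.

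First I pair the symmetrized equation $SAU'-SQU=SF$ with $U$ in $L^2(\RR)$. The $SA$ term integrates by parts to $-\tfrac12\langle(SA)'U,U\rangle=O(\eps)\|U\|_{L^2}^2$, using symmetry of $SA$ and decay at infinity; the $-SQ$ term is bounded below by $c\|v\|_{L^2}^2$ from \eqref{infbds}. On the right side, the $f'$ factor inside $F$ is integrated by parts onto $u$ to produce $\|f\|_{L^2}\|u'\|_{L^2}$ plus $O(\eps)\|f\|\|u\|$, while the remaining pieces yield $\eps\|h\|\|u\|+\|g\|\|v\|$. Applying the same pairing to the once-differentiated equation $AU''-QU'=F'+O(\eps)(U+U')$ yields the analogous bound for $\|v'\|_{L^2}^2$ in terms of $\|F'\|$-type data and a small $\|U'\|_{L^2}^2$ error.

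The crucial step is the Kawashima pairing: take the inner product of $AU'-QU=F$ with $KU'$. Since the antisymmetric part of $KA$ drops pointwise, $\langle KAU',U'\rangle=\int (U')^T\Re(KA)\,U'$; the compensator inequality $\Re(KA)\ge c\Id+\Re(SQ)$ together with $\Re(SQ)$ supported and negative semidefinite only on the $v$-block gives
\[
\langle KAU',U'\rangle \ \ge\ c\|U'\|_{L^2}^2 \,-\, C\|v'\|_{L^2}^2.
\]
The cross term $\langle KQU,U'\rangle$ is bounded by $C\|v\|_{L^2}\|U'\|_{L^2}$ because $QU$ is supported on the $v$-block, and $\langle KF,U'\rangle$ by $C\|F\|_{L^2}\|U'\|_{L^2}$; both are absorbed by Young's inequality. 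Variable-coefficient errors enter only as $(KA)'=O(\eps)$ commutators from integration by parts.

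Inserting the $\|v\|$ and $\|v'\|$ bounds from the symmetrizer steps into the Kawashima estimate, and using Young's inequality to absorb the $\|u'\|$ and $\|U'\|$ cross terms that appear, closes to the stated bound
\[
\|U'\|_{L^2}^2+\|v\|_{L^2}^2 \ \le\ C\big(\|f\|_{H^2}^2+\|h\|_{H^1}^2+\|g\|_{H^1}^2+\eps^2\|u\|_{L^2}^2\big).
\]
The main obstacle is the Kawashima step in variable-coefficient physical space (as opposed to the Fourier formulation familiar from hyperbolic--parabolic systems): one must verify that each integration-by-parts commutator contributes only $O(\eps)$ and that the pointwise algebraic dissipation $\Re(KA)\ge c\Id+\Re(SQ)$ combines correctly with the $v$-dissipation from the symmetrizer step to bound the otherwise uncontrolled $u'$. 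Because Kawashima's inequality is local and algebraic, this works verbatim once the bookkeeping is organized; the remaining ingredients are routine $L^2$ energy estimates.
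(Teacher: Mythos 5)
Your argument is in substance the same as the paper's: the paper packages your three pairings into the single symmetrizer $\cS=\D_x^2\circ S+\D_x\circ K-\lambda S$ applied to the equation and paired with $U$, which after integration by parts is exactly your basic symmetrizer estimate (the $-\lambda S$ piece), your estimate for $v'$ via the differentiated equation (the $\D_x^2\circ S$ piece), and your Kawashima pairing with $KU'$ (the $\D_x\circ K$ piece), with the large parameter $\lambda$ playing the role of your Young-inequality weights.

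There is, however, one quantitative point where your bookkeeping as written does not close. You claim all variable-coefficient commutators ($(SA)'$, $(SQ)'$, $(KA)'$, etc.) are $O(\eps)$ ``hence absorbable for $\eps$ small.'' But these commutators multiply $\|U\|_{L^2}^2$, which contains $\|u\|_{L^2}^2$, and there is no $\|u\|^2$ term on the left-hand side to absorb it into; it must go to the right-hand side, where the target allows only $\eps^2\|u\|_{L^2}^2$ (the square of $\eps\|u\|_{L^2}$). An $O(\eps)\|u\|_{L^2}^2$ error is not dominated by $C\eps^2\|u\|_{L^2}^2$, so with only the $O(\eps)$ bound the lemma's estimate fails. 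The fix is that the coefficients are smooth functions of $\bar u_{NS}$ and $\bar u_{NS}'=O(\eps^2)$ by \eqref{NSbds}, so in fact $\|(SA)'\|_{L^\infty}+\|(SQ)'\|_{L^\infty}=O(\eps^2)$ (this is \eqref{commest} in the paper), and likewise for $(KA)'$ and the terms $A'U'$, $Q'U$ produced by differentiating the equation in your second step. With the $O(\eps^2)$ bound all commutator contributions are $\lesssim\eps^2(\|U\|^2+\|U'\|^2)$ and the estimate closes exactly as you describe; this sharper power is not a cosmetic detail, since the $\eps\|u\|_{L^2}$ on the right of \eqref{sharph1eq} is what the later Chapman--Enskog step needs.
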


\begin{proof}
 
 Introduce the symmetrizer
\begin{equation} 
\label{def615}
\cS = \D_x^2 \circ  S   + \D_x \circ  K   -  \lambda  S .  
\end{equation}
One has 
$$
\begin{aligned}
& \Re \D_x^2  \circ    S  \circ (A\D_x  -      Q)  =    \mez \D_x \circ (S A)' \circ \D_x  -     \D_x   \circ S  Q \circ    \D_x  -  \Re \D_x  \circ ( SQ)'   
\\
& \Re \D_x \circ K  (A\D_x -  Q)  =  \D_x \circ  \Re KA \circ  \D_x  -   \re \D_x \circ K  Q 
\\
&  \Re  S (   A\D_x -    Q )    = \mez  (S A)'   -   S Q .
\end{aligned}
$$
Thus 
$$
\begin{aligned}
\Re \cS  \circ ( A\D_x -    Q)  =  & \D_x \circ (\Re AK  - S   Q) \circ \D_x   +   \lambda S   Q  
\\ 
& + \mez \D_x  \circ (SA)' \circ \D_x -  \mez  \lambda  (S A)'   -   \Re \D_x \circ  (SQ)' -   \Re \D_x \circ K  Q. 
\end{aligned}
$$
Therefore, for  $U \in H^2 (\RR)$,  \eqref{infbds} implies that 
$$
\begin{aligned}
   \Re ( \cS   F , U)_{L^2}   \ge & \ c \| \D_x U \|^2_{L^2} + 
\lambda c  \|   v \|^2_{L^2} 
\\ &- \mez \| (S A)'  \|_{L^\infty}  \big( \| \D_x U \|^2_{L^2} + \lambda  \|  U \|^2_{L^2}\big) 
\\
& - \| (SQ)' \|_{L^\infty} \|  U \|_{L^2} \| \D_x U \|_{L^2}  - \| K   \|_{L^\infty} \|\D_x  U \|_{L^2} 
\|  q v  \|_{L^2} . 
\end{aligned}
$$
Taking
 $$
\lambda =  \frac{2}{c} \| K  \|^2_{L^\infty} \| q \|_{L^\infty} ,
$$
and using that 
\begin{equation}
\label{commest}
  \| (S A)'  \|_{L^\infty} + \| (SQ)'  \|_{L^\infty} = O( \eps^2) 
\end{equation}
yields
$$
\| U' \|^2_{L^2} + \| v \|_{L^2}^2 \lesssim     \Re ( \cS   F , U)_{L^2}   + 
\eps^2 \big( \| U \|^2_{L^2} +  \| U' \|^2_{L^2} \big).  
$$

In the opposite direction, using the block structure of $S$, 
$$
\begin{aligned}
   \Re ( \cS F , U)_{L^2}   \le      & \| \D_x  U \|_{L^2} \big( 
 \| \D_x (SF) \|_{L^2}  + \| K \|_{L^\infty} \| F \|_{L^2} \big) 
\\
& + \lambda \big(\eps    \| S_{11} \|_{L^\infty}   \|  u \|_{L^2}  
\|  h  \|_{L^2}  +    \|  ( S_{11} u) '  \|_{L^2}   \|  f \|_{L^2}  
\\
& \hskip 4cm +  \| S_{22} \|_{L^\infty}  \|  v \|_{L^2}   \| g  \|_{L^2} \big).  
\end{aligned}
$$
Using again    
that the derivatives of the coefficients are $O(\eps^2)$,  this implies  that 
\begin{equation*} 
\begin{aligned}
 \Re ( \cS F , U)_{L^2}  \lesssim  & 
 \big(   
\|  f  \|_{H^2}   +   \| h \|_{H^1} + \| g \|_{H^1} \big) \| U' \|_{L^2}  
\\ 
 &+    \eps  \| h \|_{L^2}  \| u \|_{L^2}  +  \eps^2 \| f \|_{L^2} \| u \|_{L^2}  + \| g \|_{L^2} \| v \|_{L^2},
\end{aligned}
\end{equation*}
The estimate \eqref{sharph1eq}  follows provided that $\eps$  is small enough. 

This proves the lemma under the additional assumption that $U \in H^2$. 
When $U\in H^1$, the estimates follows using Friedrichs mollifiers. 
\end{proof}

\begin{proof}[Proof of Proposition~\ref{energypropL2}]
Consider the system \eqref{simplfeqtilde}
Because the coefficients are functions of $\bar u_{NS}$ and its derivatives, there holds   
  $$
\begin{aligned}
\| h, h'\|_{L^2}   &  \le C \eps \|  u'\|_{L^2} 
+ \eps \| u\|_{L^2}
\\
\| \tilde g, \tilde g'\|_{L^2}  &  \le  C\big(  \| f', f'', g, g' \|_{L^2}  
+   \eps^2  \| u'\|_{L^2}   \big) 
\end{aligned}
$$
and 
 $$
\|  v '\|_{L^2}        \le   
\big\| \tilde v'   \big\|_{L^2 }   +   C \big(  \big\|   u'   \big\|_{L^2}
+  \eps^2 \| u\|_{L^2} \big). 
$$
Therefore, the bounds \eqref{sharph1eq}  for  $(\widetilde U', \tilde v)$ 
imply that 
   \begin{equation}\label{invbdtilde}
\big\|  U'   \big\|_{L^2 }   + \big\| \tilde v   \big\|_{L^2}  \le 
C    \big( \big\| (f, f', f'', h, h',  \hat g, \hat g') \|_{L^2 }
 + \eps  \big\| u  \big\|_{L^2 }  \big).
\end{equation}
Multiplying by $\eps^\mez$ and using the estimates of $h$ and $g$ above, yields  \eqref{invbd} for $\delta = 0$. 

For $\delta > 0$ small, consider 
$U ^w =  e^{ \eps \delta \la x \ra } U$. Then, $U^w$ satisfies 
 \begin{equation}
  \label{inteqs6w}
  \cL_*^\eps U^w  = 
  \begin{pmatrix} f^w \\ g^w    \end{pmatrix}  ,   
  \end{equation}
  with $f^w =   e^{ \eps \delta \la x \ra }  f $ and $g^w  =   e^{ \eps \delta \la x \ra }  g + \eps \delta   \la x\ra ' (A_{21} u^w + A_{22}  v^w ) $. 
  We note that, 
  $$
  \|  U'  \|_{L^2_{\eps, \delta}}  \le    \|  (U^w)'  \|_{L^2_{\eps}}  +   \eps \|   U^w  \|_{L^2_{\eps}} , 
, \quad 
    \|  \tilde v  \|_{L^2_{\eps, \delta}} \lesssim   \|  \tilde v^w  \|_{L^2_{\eps}} , 
  $$
  $$
    \|   f^w, (f^w)', (f^w)''   \|_{L^2_{\eps}} \lesssim   \|  (f, f', f'')  \|_{L^2_{\eps, \delta}} , 
$$
$$
\begin{aligned}
     \|   g^w, (g^w)'  \|_{L^2_{\eps}}   \lesssim   \|  (g, g') \|_{L^2_{\eps, \delta}}  + 
    \eps  \delta \| (U, U') \|_{L^2_{\eps, \delta}}.  
  \end{aligned}
  $$
   We use the estimate \eqref{invbd} with $\delta = 0$ for 
 $U^w$, and the Proposition follows provided that $\delta$ is small enough.
\end{proof}

\subsection{Higher order estimates} 

\begin{prop}\label{estHs}  There are constants $C$, $\eps_0 > 0$, $\delta_0 > 0$ and for 
all $k \ge 2$, there is  $C_k$, such that 
 $0< \eps \le \eps_0$, $\delta \le \delta_0$,  
 $U \in H^s_{\eps, \delta}$, 
  $f \in H^{s+1}_{\eps, \delta}$ and $g \in H^{s}_{\eps, \delta}$ 
  satisfying \eqref{simplfeq} satisfies:
\begin{equation}\label{sharph2eq}
\begin{aligned}
  \| \D_x^k U' \|_{L^2_{\eps, \delta}} + 
&  \| \partial^k_x \tilde v \|_{L^2_{\eps, \delta}}  
  \le  C  
  \| \partial_x^k (f, f', f'', g, g')  \|_{L^2_{\eps, \delta} }     \\
  &  +    \eps^k  C_k \big( 
 \| U' \|_{H^{k-1}_{\eps, \delta} } + \eps   \| \tilde v \|_{H^{k-1}_{\eps, \delta} }  +
  \eps \| u \|_{L^2_{\eps, \delta}}\big)  
\end{aligned}
\end{equation}

\end{prop}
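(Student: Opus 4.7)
The natural strategy is to differentiate the simplified system \eqref{simplfeq} $k$ times and apply the $L^2$ (i.e.\ $H^1$-type) estimate of Lemma~\ref{lem62} and its weighted companion Proposition~\ref{energypropL2} to the differentiated unknown $\partial_x^k U$, treating commutators with the $\bar u_{NS}$-dependent coefficients as perturbations. Because all coefficients are smooth functions of $\bar u_{NS}$, which itself is a function of $\eps x$, every derivative landing on a coefficient produces a factor of $\eps$; more precisely $|\partial_x^j A|, |\partial_x^j Q|, |\partial_x^j S|, |\partial_x^j K| \le C_j \eps^j$ (as in \eqref{estcoeff}), and this is what will drive the small factors $\eps^k$ in the error term of \eqref{sharph2eq}.

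Concretely, I would apply $\partial_x^k$ to the tilde-form equation \eqref{simplfeqtilde} to obtain
\begin{equation*}
\widetilde A\,\partial_x^{k+1}\widetilde U - \widetilde Q\,\partial_x^k\widetilde U
= \partial_x^k \widehat F + [\partial_x^k,\widetilde A]\widetilde U' - [\partial_x^k,\widetilde Q]\widetilde U.
\end{equation*}
The commutator $[\partial_x^k,\widetilde A]\widetilde U'$ is a sum $\sum_{j=0}^{k-1}\binom{k}{j}(\partial_x^{k-j}\widetilde A)\partial_x^{j+1}\widetilde U$, and each term has a coefficient of size $\eps^{k-j}$; likewise for $[\partial_x^k,\widetilde Q]$. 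Thus $\partial_x^k\widetilde U$ satisfies an equation of exactly the form \eqref{simplfeq} but with source $\partial_x^k F$ plus an error controlled by $\eps$ times lower-order derivatives of $U$.

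I would then run the symmetrizer argument of Lemma~\ref{lem62} \emph{verbatim} on this differentiated system, with symmetrizer $\cS = \partial_x^2\circ S + \partial_x \circ K - \lambda S$, but now contracted against $\partial_x^k U$ in place of $U$. The key point is that the symmetrizer identity and the coercivity \eqref{infbds} are properties of the frozen-coefficient system at each point, so they apply to $\partial_x^k U$ once the commutators are moved to the right-hand side; the terms $\|(SA)'\|_{L^\infty}$ and $\|(SQ)'\|_{L^\infty}$ remain $O(\eps^2)$, so the absorption step goes through unchanged. This yields
\begin{equation*}
\|\partial_x^k U'\|_{L^2_{\eps,\delta}} + \|\partial_x^k \tilde v\|_{L^2_{\eps,\delta}}
\le C\,\|\partial_x^k(f,f',f'',g,g')\|_{L^2_{\eps,\delta}} + C_k\,\eps\,\bigl(\text{lower-order terms}\bigr),
\end{equation*}
where the ``lower-order terms'' collect the commutator contributions with their associated $\eps^{k-j}$ factors. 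Weighted extension from $\delta=0$ to $\delta\in[0,\delta_0]$ is handled exactly as in the proof of Proposition~\ref{energypropL2}, by conjugation with $e^{\eps\delta\la x\ra}$, since the extra terms produced are again $O(\eps)$ relative to the quantities already controlled.

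The main bookkeeping obstacle is the allocation of $\eps$ powers to match the statement: the leading constant $C$ must be independent of $k$, while $C_k$ absorbs the combinatorial factors and the $L^\infty$ bounds on higher coefficient derivatives. To see the factor $\eps^k$ on the lower-order remainder, I would organize the commutator sum by writing $[\partial_x^k,\widetilde A]\widetilde U' = \sum_{j=0}^{k-1}\binom{k}{j}(\partial_x^{k-j}\widetilde A)\partial_x^{j+1}\widetilde U$ and noting that converting the $L^2$ norm of $\partial_x^{j+1}U$ into the $H^{k-1}_{\eps,\delta}$ norm costs exactly $\eps^{j+1-(k-1)\cdot\text{weight}}$ factors; combined with the $\eps^{k-j}$ gained from the coefficient derivatives and the final $\eps^{\mez-k}$ weight in the definition \eqref{defwnorm} of $H^k_{\eps,\delta}$, everything lines up to give the $\eps^k C_k$ prefactor on the $\|U'\|_{H^{k-1}_{\eps,\delta}} + \eps\|\tilde v\|_{H^{k-1}_{\eps,\delta}} + \eps\|u\|_{L^2_{\eps,\delta}}$ remainder stated in \eqref{sharph2eq}. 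This careful counting, rather than any new analytical ingredient, is the only delicate part.
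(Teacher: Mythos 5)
Your proposal is correct and follows essentially the same route as the paper: differentiate the system $k$ times, treat the commutators with the $\bar u_{NS}$-dependent coefficients (each derivative of which costs a factor of $\eps$) as additional source terms, and apply the already-established $H^1$/$L^2$ estimate to $\partial_x^k U$, with the weighted case handled by conjugation as before. The only cosmetic difference is that the paper differentiates the original integrated system and uses $dq\,U=Q_{22}\tilde v$ to write the single commutator $r_k=-\sum_{l<k}\partial_x^{k-l}Q_{22}\,\partial_x^{l}\tilde v$, rather than differentiating the tilde-form equation, but the bookkeeping of $\eps$-powers you describe is exactly what is carried out there.
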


\begin{proof} 
Differentiating \eqref{inteqs6} $k$ times, yields
\begin{equation}\label{Dapriorieq}
A\partial_xU^{k} - dQ(\bar u_{NS}, v_*(\bar u_{NS}))\partial_x U^{k} =
\begin{pmatrix} \partial^k_x f'    \\ \partial^{k}_xg +   r_k  \end{pmatrix},
\end{equation}
where 
$$
r_k  =  - \sum _{l=0}^{k-1} \partial^{k-l}_x  Q_{22} \,  \D_{x}^{l} \tilde v. 
$$
Here we have used that $d q (\bar u_{NS}, v_* (\bar u_{NS}) U = Q_{22} \tilde v$. 
The   $H^1$ estimate 
  yields
$$
\begin{aligned}
\| \D_x^k U' \|_{L^2_{\eps, \delta}} + 
 \| \partial^k_x  v + p \D_{x}^k u  \|_{L^2_{\eps, \delta}}   \le  
  C  \big( & \| \partial_x^k (f, f', f'', g, g')   \|_{L^2_{\eps, \delta}}     \\
     + \eps  \| \partial_x^k  u \|_{L^2_{\eps, \delta}}  
 & +   \| \partial_x r_k \|_{L^2_{\eps, \delta}} 
+   \| r_k  \|_{L^2_{\eps, \delta}} \big) , 
\end{aligned}
$$
for $0 \le k \le s$, with  $r_0 = 0$ when $k = 0$. 
Since $Q$ is a function of $\bar u_{NS}$,   its   $k- l$-th derivative   is $O (\eps^{k - l+1})$
when $k-l > 0$. 
Therefore: 
$$
 \| \partial_x r_k \|_{L^2_{\eps, \delta}} 
+   \| r_k  \|_{L^2_{\eps, \delta}} \le C_k  \eps^{k}  \big(  \|\tilde  v' \|_{H^{k-1}_{\eps, \delta}} +
\eps   \| \tilde v \|_{L^2_{\eps, \delta} } \big). 
$$
Similarly,  for $k = 1$
$$
 \| \partial_x \tilde v_k \|_{L^2_{\eps, \delta}} 
 \le  \| \partial_x  v +  p \D_{x} u  \|_{L^2_{\eps, \delta}}  +  C 
 \eps^2  \| u  \|_{L^2_{\eps, \delta}}   
$$
and for $k \ge 2$: 
$$
 \| \partial^k_x \tilde v_k \|_{L^2_{\eps, \delta}} 
 \le  \| \partial^k_x  v + p \D_{x}^k u  \|_{L^2_{\eps, \delta}}  +  C_k 
(   \eps^k \| u'   \|_{H^{k-2}_{\eps, \delta}} +  \eps^{k+1}    \|\tilde  u  \|_{L^2 _{\eps, \delta}} \big). 
$$

\end{proof}


\section{Linearized Chapman--Enskog estimate} \label{linCEestimates}

\subsection{The approximate equations}

It remains only to estimate $\|u\|_{L^2_{\eps, \delta}}$      in order to close the estimates
and establish \eqref{invbd}.
To this end, we work with the first equation  in 
\eqref{inteqs6}
and  estimate it by comparison with the Chapman-Enskog 
approximation (see the computations Section~\ref{CEapprox}). 
  
From the second equation
$$
A_{21}u'+A_{22}v' -g=\partial_u q u+ \partial_v q v= \partial_vq \tilde v,
$$
where we use the notations $\tilde v$ of Proposition~
\ref{energypropL2}, 
we find 
\begin{equation}
\label{T2s7}
\tilde v=\partial_v q^{-1}
\Big((A_{21} + A_{22}\partial_v dv_* (\bar u_{NS}))u ' +A_{22} \tilde v  ' -g
\Big). 
\end{equation}
Introducing $\tilde v$ in the first equation, yields 
$$
(A_{11} + A_{12} d v_* (\bar u_{NS} ) ) u  + A_{12} \tilde v =  f,  
$$ 
 thus
$$
(A_{11} + A_{12} dv_* (\bar u_{NS}) ) u' =  f' - A_{12} \tilde v' -
d^2 v_* (\bar u_{NS}) (\bar u'_{NS}, u) . 
$$
 Therefore, \eqref{T2s7} can be modified to 
 \begin{equation}
 \label{T2bs7}
 \tilde v  =  c_* (\bar u_{NS}) u'   +   r 
\end{equation}
 with 
 $$
 \begin{aligned}
 r = d^{-1}_vq (\bar u_{NS}, &v_* (\bar u_{NS})) \Big(  A_{22}(\tilde v)' -g
 \\
& + dv_* (\bar u_{NS}) \big(  f' - A_{12} \tilde v' -
d^2 v_* (\bar u_{NS}) (\bar u'_{NS}, u)\big) \Big) . 
 \end{aligned}
 $$
This implies that $u$ satisfies the linearized profile equation
\begin{equation}\label{intpertu}
\begin{aligned}
\bar b_* u'- \bar {df}_* u  =   A_{12} r  - f 
\end{aligned}
\end{equation}
where $\bar b_*=b_*(\bar u_{NS})$ 
and $\bar {df}_{*} := df_*(\bar u_{NS}) = A_{11} + A_{12} dv_* (\bar u_{NS})$.


\subsection{$L^2$ estimates and proof   of the main estimates}

\begin{prop}\label{uprop}
The operator $\bar b_*\partial_x -\bar{df}_*$ has
a right inverse $(b_*\partial_x -df^*)^{\dagger}$ 
satisfying
\begin{equation}\label{rightinv}
\|(\bar b_*\partial_x -\bar {df}_*)^{\dagger}h\|_{L^2_{\eps, \delta}} \le 
C\eps^{-1}\|h\|_{L^2_{\eps, \delta}},
\end{equation}
uniquely specified by the property that the solution 
$u = (b_*\partial_x -df^*)^{\dagger} h$  satisfies 
\begin{equation}\label{phase}  
\ell_\eps  \cdot u(0) =0. 
\end{equation}
for certain unit vector $\ell_\eps$. 
\end{prop}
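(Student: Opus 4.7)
The operator $\bar b_*\partial_x - \bar{df}_*$ is a first-order ODE operator which is degenerate because $\ker b_* \neq \{0\}$ under Assumption~\ref{goodred}(i). My plan has three steps: algebraic reduction to a nondegenerate ODE using the structure of Assumption~\ref{goodred}, inversion in a rescaled variable using the Lax shock structure, and transfer back to the original variable, where the $\eps^{-1}$ loss emerges naturally from the shock width.

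First, split $u = u_1 + u_2$ with $u_1 = \pi_* u \in \ker b_*$ and $u_2 = (I - \pi_*)u$, and let $\pi_*^\ell$ denote the (constant) left-kernel projection of $b_*$. Applying $\pi_*^\ell$ to the equation annihilates the derivative term and produces the algebraic relation $-\pi_*^\ell \bar{df}_* u = \pi_*^\ell h$; by Assumption~\ref{goodred}(ii) the restriction of $\pi_*^\ell \bar{df}_*$ to $\ker b_*$ is invertible, so this expresses $u_1$ as a linear combination of $u_2$ and $h$. Substituting back into $(I - \pi_*^\ell)(\bar b_* u' - \bar{df}_* u) = (I - \pi_*^\ell)h$ and inverting $b_*$ on its range yields a nondegenerate first-order ODE for $u_2$ with smooth, $\bar u_{NS}$-dependent coefficients converging to constant limits at $\pm\infty$.

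Next, pass to the slow variable $\tilde x = \eps x$, in which $\bar u_{NS}$ becomes an $O(1)$ profile and the reduced ODE takes the standard form of a linearized small-amplitude viscous shock profile equation. The asymptotic matrices $df_*(u_\pm)$ are hyperbolic with uniform spectral gap (by Assumption~\ref{profass} together with simultaneous symmetrizability), and the Lax condition from Proposition~\ref{NSprofbds} (unstable dimension at $-\infty$ plus stable dimension at $+\infty$ equals $n+1$) implies, via the standard exponential dichotomy/conjugation argument for small-amplitude profiles à la \cite{MaZ4,MaZ5,Z1,Z2,GMWZ}, that the rescaled operator is Fredholm of index $+1$ in weighted $L^2_\delta$ for every sufficiently small $\delta > 0$, with one-dimensional kernel spanned by the translation mode $\partial_{\tilde x}\bar u_{NS}$ and trivial cokernel. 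Hence a right inverse exists whose operator norm in the rescaled norm is bounded uniformly in $\eps$.

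Finally, choose $\ell_\eps$ so that $\ell_\eps \cdot \bar u_{NS}'(0) \neq 0$ uniformly in $\eps$ (following Remark~\ref{ellchoice}, for instance taking $\ell_\eps$ close to the left eigenvector of $df_*(u_0)$ at eigenvalue $0$); the phase condition $\ell_\eps \cdot u(0) = 0$ is then transverse to the translation kernel and singles out a unique right inverse. Reverting to the original variable, the change $\tilde x = \eps x$ relates $\|\cdot\|_{L^2_{\eps,\delta}}$ to the unweighted rescaled $L^2_\delta$ norm up to $\eps$-independent constants and converts $\partial_x$ into $\eps\partial_{\tilde x}$, so the uniformly bounded inverse in the rescaled norm produces exactly the factor $\eps^{-1}$ appearing in \eqref{rightinv}. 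The main obstacle is the degeneracy of $b_*$, which rules out any direct parabolic argument; this is precisely what Assumption~\ref{goodred}(ii) is designed to bypass, after which the standard small-amplitude Lax-shock theory applies essentially verbatim.
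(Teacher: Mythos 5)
Your first step --- projecting with the left-kernel projection of $b_*$ (Assumption \ref{goodred}(i)), solving the resulting algebraic equation for the kernel component via Assumption \ref{goodred}(ii), and arriving at a nondegenerate first-order ODE for the complementary component --- is exactly the reduction the paper performs in \eqref{blokbs}--\eqref{princ}, so that part is fine.

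The gap is in the inversion step. From ``the rescaled operator is Fredholm of index $+1$'' you conclude ``hence a right inverse exists whose operator norm in the rescaled norm is bounded uniformly in $\eps$.'' That inference is invalid: Fredholmness at each fixed $\eps$ gives a right inverse for each $\eps$ but says nothing about uniformity of its norm, and the uniform bound is the entire content of \eqref{rightinv}. Nor can uniformity be obtained by treating the rescaled family as a perturbation of a fixed operator, because it is a genuinely singular perturbation: writing the reduced equation as $u_2'-m(x)u_2=\check h$ with $m=\check b^{-1}\check a$, the matrix $m$ has (by the weak Lax structure, cf.\ \cite{MaZ4}) one simple eigenvalue $\eps\mu(x)$ of order $\eps$ while the remaining eigenvalues satisfy $|\Re\lambda|\ge c>0$; in the slow variable $\tilde x=\eps x$ these transverse eigenvalues become $O(\eps^{-1})$, so $\eps\check b\,\partial_{\tilde x}-\check a$ does not tend to an invertible limit and the ``standard theory verbatim'' claim does not deliver the estimate. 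The paper supplies precisely the missing quantitative step: a further block-diagonalization $\omega^{-1}m\omega=\text{blockdiag}\{p^+,\eps\mu,p^-\}$, in which the fast blocks $p^\pm$ are inverted in the \emph{original} variable $x$ (where they have an $O(1)$ spectral gap and yield bounds with \emph{no} $\eps^{-1}$ loss, cf.\ Remark \ref{trans}), while only the scalar slow mode $z_0'-\eps\mu z_0=h_0$ is rescaled; there the factor $\eps^{-1}$ arises from the rescaled source $\tilde h_0=\eps^{-1}h_0(\cdot/\eps)$, and the condition $\tilde z_0(0)=0$ (which translates back to $\ell_\eps\cdot u(0)=0$) resolves the index-one indeterminacy via the gap lemma of \cite{MeZ3}, using $\mu_->0>\mu_+$. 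A smaller inaccuracy: the kernel of the frozen-coefficient operator $\bar b_*\partial_x-\bar{df}_*$ is not exactly spanned by the translation mode (applying the operator to $\bar u_{NS}'$ leaves the residual $-db_*(\bar u_{NS})[\bar u_{NS}',\bar u_{NS}']$); the paper never needs to identify the kernel, only the sign structure of the slow eigenvalue, which is what makes the scalar mode surjective with one-dimensional null space.
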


Taking this proposition for granted, we finish the proof of the main estimates in Proposition~\ref{invprop}.

 \begin{prop}
 \label{prop72}
There are  constants $C$,  $\eps_0 > 0 $ and  $\delta_0 > 0$   
such that 
for $\eps \in ]0, \eps_0]$, $\delta \in [0, \delta_0]$,       
$f \in H^{3}_{\eps, \delta} $, $g \in H^{2}_{\eps, \delta} $ and 
$U \in H^2_{\eps, \delta}$ satisfying  
  \eqref{neweq} and \eqref{phasecond}  
  \begin{equation}
  \label{invbdH2s7}
\big\| U \big\|_{H^2_{\eps, \delta} }\le 
C\eps^{-1}\big( \big\| f \|_{H^{3}_{\eps, \delta} }
+ \big\|g  \big\|_{H^2_{\eps, \delta} }\big).
\end{equation}
\end{prop}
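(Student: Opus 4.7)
The plan is to close the estimate by combining the ODE-type bound of Proposition \ref{uprop} for the macroscopic variable $u$ with the internal energy estimates of Propositions \ref{energypropL2} and \ref{estHs} for $U'$ and the microscopic variable $\tilde v$. The structural point is that Proposition \ref{uprop} loses one factor of $\eps^{-1}$ (reflecting the near-degeneracy of the linearized profile ODE), but its effective right-hand side in \eqref{intpertu} is built out of $\tilde v'$, $f'$, $g$, and $\bar u_{NS}'\,u$, each of which turns out to gain at least one power of $\eps$ under the $H^s_{\eps,\delta}$-scaling, so that losses and gains cancel exactly to yield the claimed $\eps^{-1}$.

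For the fluid bound I would start from the linearized Chapman--Enskog identity \eqref{intpertu} together with the phase condition \eqref{phasecond}, and apply Proposition \ref{uprop} to obtain $\|u\|_{L^2_{\eps,\delta}}\le C\eps^{-1}\|A_{12}r-f\|_{L^2_{\eps,\delta}}$. The explicit formula for $r$ following \eqref{T2bs7}, combined with $|\bar u_{NS}'|\le C\eps^2 e^{-\theta\eps|x|}$ from Proposition \ref{NSprofbds} (valid in the weighted norm as soon as $\delta\le \theta/2$), gives
$$
\|r\|_{L^2_{\eps,\delta}}\le C\bigl(\|\tilde v'\|_{L^2_{\eps,\delta}}+\|f'\|_{L^2_{\eps,\delta}}+\|g\|_{L^2_{\eps,\delta}}\bigr)+C\eps^2\|u\|_{L^2_{\eps,\delta}}.
$$
The only nontrivial input on the right is $\|\tilde v'\|_{L^2_{\eps,\delta}}$, which I would bound by Proposition \ref{estHs} with $k=1$, and then eliminate the auxiliary $\|U'\|$, $\|\tilde v\|$ terms on its right-hand side via Proposition \ref{energypropL2}. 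The key arithmetic ingredient is the elementary inequality $\|\partial_x^k h\|_{L^2_{\eps,\delta}}\le \eps^k\|h\|_{H^s_{\eps,\delta}}$ for $k\le s$, read off directly from \eqref{defwnorm}: it gives each derivative of $(f,g)$ appearing on the right of the energy estimates an extra factor of $\eps$, yielding
$$
\|\tilde v'\|_{L^2_{\eps,\delta}}\le C\eps\bigl(\|f\|_{H^3_{\eps,\delta}}+\|g\|_{H^2_{\eps,\delta}}\bigr)+C\eps^2\|u\|_{L^2_{\eps,\delta}}.
$$
Plugging back into the first line and absorbing the residual $\eps\|u\|_{L^2_{\eps,\delta}}$ into the left-hand side for $\eps$ sufficiently small produces $\|u\|_{L^2_{\eps,\delta}}\le C\eps^{-1}(\|f\|_{H^3_{\eps,\delta}}+\|g\|_{H^2_{\eps,\delta}})$.

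With this bound in hand, a second pass through Proposition \ref{energypropL2} controls $\|U'\|_{L^2_{\eps,\delta}}$ and $\|\tilde v\|_{L^2_{\eps,\delta}}$ (hence also $\|v\|_{L^2_{\eps,\delta}}$ via $v=\tilde v - p\,u$) by $C(\|f\|_{H^3_{\eps,\delta}}+\|g\|_{H^2_{\eps,\delta}})$, and Proposition \ref{estHs} with $k=1$ gives $\|U''\|_{L^2_{\eps,\delta}}\le C\eps(\|f\|_{H^3_{\eps,\delta}}+\|g\|_{H^2_{\eps,\delta}})$. Since, by definition \eqref{defwnorm}, $\|U\|_{H^2_{\eps,\delta}}=\|U\|_{L^2_{\eps,\delta}}+\eps^{-1}\|U'\|_{L^2_{\eps,\delta}}+\eps^{-2}\|U''\|_{L^2_{\eps,\delta}}$, the three contributions are each of size $C\eps^{-1}(\|f\|_{H^3_{\eps,\delta}}+\|g\|_{H^2_{\eps,\delta}})$, summing to exactly \eqref{invbdH2s7}. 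I expect the main obstacle to be the tight bookkeeping of $\eps$-powers: the $\eps^{-1}$ loss of the fluid inverse must be repaid with no slack by the derivative-counting described above, and this succeeds only because the norms $H^s_{\eps,\delta}$ are set up to respect the $\tilde x=\eps x$ scaling of the profile; any milder choice of weights would leave an uncorrectable residual at this step.
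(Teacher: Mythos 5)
Your proposal is correct and follows essentially the same route as the paper's own proof: apply Proposition \ref{uprop} to the linearized Chapman--Enskog equation \eqref{intpertu} to get $\|u\|_{L^2_{\eps,\delta}}\le C\eps^{-1}(\|\tilde v'\|+\|(f,f',g)\|+\eps^2\|u\|)$, bound $\|\tilde v'\|$ by combining Propositions \ref{energypropL2} and \ref{estHs} with $k=1$, absorb the residual $\eps^2\|u\|$ term for $\eps$ small, and then reassemble $\|U\|_{H^2_{\eps,\delta}}$ from the resulting bounds on $\|u\|$, $\|\tilde v\|$, $\|U'\|$, $\|U''\|$. The $\eps$-power bookkeeping via $\|\partial_x^k h\|_{L^2_{\eps,\delta}}\le\eps^k\|h\|_{H^s_{\eps,\delta}}$ matches the paper's computation leading from \eqref{temp2}--\eqref{est713} to \eqref{invbdH2s7}.
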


\begin{proof}
Going back now to \eqref{intpertu}, $u$ satisfies 
$$
\begin{aligned}
\bar b_* u'- \bar {df}_* u &=  O(|\tilde v'|+ |g| + |f'| + \eps^2 | u |  )  -   f,
\end{aligned}
$$
   If in addition  $u$ satisfies the condition \eqref{phase}
then  
\begin{equation}
\label{temp2}
\|u\|_{L^2_{\eps, \delta}}\le C   \eps^{-1} 
( \|\tilde v'\|_{L^2_{\eps, \delta} }
+ \|(f, f',g)\|_{L^2_{\eps, \delta} }     + \eps^2  \| u \|_{L^2_{\eps, \delta}} \big) . 
\end{equation}

By  Proposition~\ref{energypropL2} and Proposition~\ref{estHs} for $k = 1$, we have 
  \begin{equation} 
  \label{est77}
\big\| U'   \big\|_{L^2_{\eps, \delta} }   + \big\| \tilde v   \big\|_{L^2_{\eps, \delta} }  \le 
C    \big( \big\| (f, f', f'', g, g') \|_{L^2_{\eps, \delta} }
 + \eps  \big\| u  \big\|_{L^2_{\eps, \delta} }  \big).
\end{equation}
\begin{equation}
\label{est78}
\begin{aligned}
  \|   U'' \|_{L^2_{\eps, \delta}} + &
   \big\| \tilde v '   \big\|_{L^2_{\eps, \delta} }  \le 
   \\
& C    \big( \big\| (f', f'', f''',  g',   g'') \|_{L^2_{\eps, \delta} }
 + \eps  \big\| U'  \big\|_{L^2_{\eps, \delta} }   + 
 \eps^2   \big\| u  \big\|_{L^2_{\eps, \delta} } \big).
\end{aligned}
 \end{equation}
Combining these estimates,  
this implies 
 \begin{equation*} 
 \begin{aligned}
  \big\| \tilde v '   \big\|_{L^2_{\eps, \delta} }  & \le    
C      \big( \big\| (f', f'', f''',  g',   g'') \|_{L^2_{\eps, \delta} } + \eps \big\| (f, f', f'', g, g') \|_{L^2_{\eps, \delta} }
 + \eps^2  \big\| u  \big\|_{L^2_{\eps, \delta} }  \big)\\
& \le    
C      \big(   \eps \big\| (f, f', f'', g, g') \|_{H^1_{\eps, \delta} }
 + \eps^2  \big\| u  \big\|_{L^2_{\eps, \delta} }  \big).
 \end{aligned}
\end{equation*}
Substituting in \eqref{temp2}, yields 
$$ 
\eps \|u\|_{L^2_{\eps, \delta}} \le C \big(  \|(f, f',g )\|_{L^2_{\eps, \delta}} + 
 \eps \|(f, f',f'',g, g' )\|_{H^1_{\eps, \delta}}    
+ \eps^2 \|  u \|_{L^2_{\eps, \delta}} \big). 
$$
Hence for $\eps $ small,  
\begin{equation}\label{temp3}
\eps \|u\|_{L^2_{\eps, \delta}} \le C \big(  \|(f, f',g )\|_{L^2_{\eps, \delta}} + 
 \eps \|(f, f',f'',g, g' )\|_{H^1_{\eps, \delta}}     \big). 
\end{equation}
 
Plugging this estimate in \eqref{est77} 
 \begin{equation} 
  \label{est711}
\big\| U'   \big\|_{L^2_{\eps, \delta} }   + \big\| \tilde v   \big\|_{L^2_{\eps, \delta} } 
+  \eps  \big\| u  \big\|_{L^2_{\eps, \delta} }  \le 
C     \big\| (f, f', f'', g, g') \|_{H^1_{\eps, \delta} }
 +  \big).
\end{equation}
Hence, with \eqref{est78}, one has 
\begin{equation}
\label{est712}
\begin{aligned}
  \|   U'' \|_{L^2_{\eps, \delta}} + &
   \big\| \tilde v '   \big\|_{L^2_{\eps, \delta} }  \le 
   \\
& C    \big( \big\| (f', f'', f''',  g',   g'') \|_{L^2_{\eps, \delta} }
 + \eps   \big\| (f, f', f'', g, g') \|_{H^1_{\eps, \delta} }\big).
\end{aligned}
 \end{equation}
Therefore, 
 \begin{equation}
\label{est713}
 \big\| U'   \big\|_{H^1_{\eps, \delta} }          
+  \big\| \tilde v   \big\|_{L^2_{\eps, \delta} } 
+  \eps  \big\| u   \big\|_{L^2_{\eps, \delta} }  \le 
  C     \big\| f, f', f'', g, g'  \big\|_{H^1_{\eps, \delta} }  
\end{equation}
The left hand side dominates 
$$
  \big\| U'   \big\|_{H^1_{\eps, \delta} }  + \eps  \big\| U'   \big\|_{L^2_{\eps, \delta} } 
  =  \eps \big\| U'   \big\|_{H^2_{\eps, \delta} } 
  $$
and the right hand side is smaller than or equal to  
$  \big\|  f  \big\|_{H^2_{\eps, \delta} } +  \big\| g   \big\|_{H^1_{\eps, \delta} } $.
The estimate \eqref{invbdH2s7}  follows.   
\end{proof}

Knowing a bound  for $\| u \|_{L^2_{\eps, \delta}}$, Proposition~\ref{estHs} immediately implies

\begin{prop}\label{prop73}
There are  constants $C$,  $\eps_0 > 0 $ and  $\delta_0 > 0$   
and  for  $s \ge 3$  there is a constant $C_s$
such that 
for $\eps \in ]0, \eps_0]$, $\delta \in [0, \delta_0]$,     
 $f \in H^{s+1}_{\eps, \delta} $, $g \in H^{s}_{\eps, \delta} $ and 
$U \in H^s_{\eps, \delta}$ satisfying  
  \eqref{neweq} and \eqref{phasecond}, one has 
 \begin{equation}\label{invbdHs7}
\big\|  U   \big\|_{H^s_{\eps, \delta} }\le 
C\eps^{-1}\big( \big\| f \|_{H^{s+1}_{\eps, \delta} }
+ \big\|g  \big\|_{H^s_{\eps, \delta} }\big) + C_s  \big\|  U  \big\|_{H^{s-1}_{\eps, \delta} } . 
\end{equation}

\end{prop}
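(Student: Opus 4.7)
The plan is to deduce \eqref{invbdHs7} directly from Proposition~\ref{estHs} by a bookkeeping argument; no new analytic input is required, because the nontrivial $L^2$-bound on $u$ that was the substance of Proposition~\ref{prop72} is already absorbed into the lower-order term $\|U\|_{H^{s-1}_{\eps,\delta}}$ appearing on the right of \eqref{invbdHs7}.

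The first step is the elementary splitting
\begin{equation*}
\|U\|_{H^s_{\eps,\delta}} \le \|U\|_{H^{s-1}_{\eps,\delta}} + \eps^{-s}\|\partial_x^s U\|_{L^2_{\eps,\delta}},
\end{equation*}
which reduces the problem to controlling the single top-order term $\eps^{-s}\|\partial_x^s U\|_{L^2_{\eps,\delta}}$. Since $s\ge 3$, I apply Proposition~\ref{estHs} with $k=s-1\ge 2$, obtaining
\begin{equation*}
\|\partial_x^s U\|_{L^2_{\eps,\delta}} \le C\|\partial_x^{s-1}(f,f',f'',g,g')\|_{L^2_{\eps,\delta}} + \eps^{s-1}C_{s-1}\bigl(\|U'\|_{H^{s-2}_{\eps,\delta}} + \eps\|\tilde v\|_{H^{s-2}_{\eps,\delta}} + \eps\|u\|_{L^2_{\eps,\delta}}\bigr).
\end{equation*}

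The second step is to multiply by $\eps^{-s}$ and identify the terms correctly against the scaled norms in \eqref{defwnorm}. For the source contributions, comparing summand-by-summand with \eqref{defwnorm} shows that each $\eps^{-s}\|\partial_x^{s-1}F\|_{L^2_{\eps,\delta}}$ with $F\in\{f,f',f'',g,g'\}$ is bounded by $\eps^{-1}(\|f\|_{H^{s+1}_{\eps,\delta}}+\|g\|_{H^s_{\eps,\delta}})$; for instance, $\eps^{-s}\|\partial_x^{s-1}f\|_{L^2_{\eps,\delta}} = \eps^{-1}\cdot \eps^{3/2-s}\|e^{\delta\eps\langle x\rangle}\partial_x^{s-1}f\|_{L^2}$, whose last factor is one summand of $\|f\|_{H^{s-1}_{\eps,\delta}}\le \|f\|_{H^{s+1}_{\eps,\delta}}$, and the cases $f',f'',g'$ are analogous (with the $f''$ case even gaining an extra power of $\eps$). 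For the remainder, the key observation is that differentiation shifts the $\eps$-weighting by one power, so $\eps^{-1}\|U'\|_{H^{s-2}_{\eps,\delta}}\le \|U\|_{H^{s-1}_{\eps,\delta}}$; combined with the trivial bound $\|\tilde v\|_{H^{s-2}_{\eps,\delta}}+\|u\|_{L^2_{\eps,\delta}}\lesssim \|U\|_{H^{s-1}_{\eps,\delta}}$ (using $\tilde v=v+pu$ with $p$ smooth), everything is absorbed into $C_s\|U\|_{H^{s-1}_{\eps,\delta}}$.

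I do not anticipate a substantive obstacle: the one thing that has to be verified carefully is the consistency of the $\eps$ exponents across the different scaled Sobolev norms, but this is entirely mechanical once the splitting in Step~1 is made and Proposition~\ref{estHs} is applied at the correct level $k=s-1$. Assembling the two steps yields \eqref{invbdHs7} directly.
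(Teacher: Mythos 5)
Your proof is correct and is essentially the paper's own argument: the paper simply states that Proposition~\ref{estHs} ``immediately implies'' the result once $\|u\|_{L^2_{\eps,\delta}}$ is controlled, and your application of \eqref{sharph2eq} at level $k=s-1$ together with the $\eps$-exponent bookkeeping is exactly the computation being left implicit. Your additional observation that the $\eps\|u\|_{L^2_{\eps,\delta}}$ term need not be estimated separately (it carries the prefactor $\eps^{s-1}\cdot\eps$ and so is absorbed into $C_s\|U\|_{H^{s-1}_{\eps,\delta}}$ after multiplying by $\eps^{-s}$) is a correct and harmless simplification consistent with the stated form of \eqref{invbdHs7}.
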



\subsection{Proof of Proposition~\ref{uprop}} 
 
By Assumption \ref{goodred}(i), we may assume that there are  linear coordinates  
$u = (u_1, u_2) \in \RR^{n_1} \times \RR^{n_2}$ 
and $h =( h_1, h_2)  \in \RR^{n_1} \times \RR^{n_2}$, with $n_2 = \mathrm{rank} \ b_* (\bar u) $  
 such that   
\begin{equation}
\label{blokbs}
   b_* (\bar u)  = \begin{pmatrix}0 & 0 \\ b_{21} (\bar u) & b_{22} (\bar u) \end{pmatrix}
 \end{equation}
 and $b_{22}(\bar u)$ is uniformly invertible on   $ \cU_*$. 
 Introducing the new variable
 \begin{equation}
 \label{goodu}
 \tilde u_2 = u_2 +   \bar V  u_1, 
 \quad \bar V =  ( b^{22}) ^{-1}  b_{21}   (\bar u_{NS}),
 \end{equation}
 the equation 
$\bar b_* u'- \bar{df}_*u =h$  has the form: 
 \begin{equation}
 \label{blockprofeq}
 \begin{aligned}
\bar a^{11}  u_1 + \bar a^{12}  \tilde u_2 =  h_1,\\
\bar b^{22}  \tilde u_2'   - \bar a^{21}u_1 - \bar a^ {22} \tilde u_2  =     h_2
\end{aligned}
 \end{equation}
 where 
 $$
 \bar a :=   \bar {df}_*     \begin{pmatrix} \Id & 0 \\ - \bar V   & \Id \end{pmatrix} 
 +   \bar b*  \begin{pmatrix} 0 & 0 \\   \bar V '   & 0 \end{pmatrix}. 
 $$
 
 Assumption \ref{goodred}(ii) implies that the left upper corner block 
 $\bar a^{11}$  is uniformly invertible. Solving the first equation
for $u_1$,   we obtain
the reduced nondegenerate ordinary differential equation 
$$
\bar b_*^{22} \tilde u_2'  +   \bar a^ {21}(\bar a^{11})^{-1}
  \bar a^ {12}  \tilde u_2 
- \bar a^{22} \tilde u_2=  h_2  +  \bar a^ {21}(\bar a^{11})^{-1}
h_1
$$
or
\begin{equation}\label{princ}
\begin{aligned}
\check b u_2'-\check a u_2
 = \check h =O(|h_1|+|h_2|). 
\end{aligned}
\end{equation}
 
Note that $\det \bar df_* =\det \bar a^{11} \det \check a$ by
standard block determinant identities, 
so that $\det \check a  \sim \det \bar df_*$ by Assumption \ref{goodred}(ii).
Moreover, as established in \cite{MaZ4},  by  Assumption \ref{profass} and the construction of the profile
$\bar u_{NS}$  
we find that $ m := (\check b)^{-1}\check a$ has the following properties:

 \quad  i)  with $m_\pm$ denoting the end  points values of $m$, there is $\theta > 0$ such that 
  for all $k$ : 
\begin{equation}
\label{est718}
| \D_x^k ( m(x) - m_\pm)  | \lesssim  \eps^{k+1} e^{ - \eps  \theta | x |};       
\end{equation}
   
\quad    ii)   $m (x)$  has a single simple eigenvalue   of order $\eps$, 
dented by $ \eps \mu (x) $,  and  there is $c > 0$ such that 
   for all $x$ and $\eps $ 
the other eigenvalues $\lambda$ satisfy  $| \Re \lambda | \ge c$; 

\quad iii)  the end point values  $\mu_\pm$ of $\mu$  satisfy
\begin{equation}
\label{est719} 
\mu_-  \ge   \alpha    \qquad \mu_+ \le - \alpha  
\end{equation}
for some $\alpha  > 0$ independent of $\eps$.

\smallbreak 
In the strictly parabolic case $\det b_*\ne 0$, this follows
by a lemma of Majda and Pego \cite{MP}.  

 At this point, we have reduced to the case 
\begin{equation}\label{semifinalform}
u_2'- m (x)   u_2 =  O(|h_1|+|h_2|),
 \end{equation}
with $m$ having the properties listed above. 
The important feature is that $m' = O (\eps^2)  << \eps $,  the spectral gap between stable, unstable, and $\eps$-order subspaces
of $m$.  The conditions above imply that there is a matrix 
$ \omega $ such that 
$$
p := \omega ^{ -1} m \omega   =    \blockdiag\{p^+,  \eps  \mu , p-\},
$$
 where the spectrum of $p_\pm$ lies in $ \pm \Re \lambda \ge c $. Moreover, $\omega $ 
 and $p$ satisfies estimates 
 similar to \eqref{est718}. The change of variables 
 $u_2 =  \omega  z$  reduces \eqref{semifinalform}
 to 
 \begin{equation}
 \label{finalform}
 z'  -   p z   =      \omega^{-1} \omega' z    +     O(|h_1|+|h_2|)  . 
 \end{equation}

The equations  $(z^+)'- p^+ z^+ = h^+ $ and
$(z^-)'- p^-z^-= h^- $ either by standard linear theory 
\cite{He} or by symmetrizer estimates as in \cite{GMWZ},  admit unique
  solutions in weighted $L^2$ spaces,  satisfying 
$$
\| e^{  \delta | x |}  z^ \pm\|_{L^2}\le C \| e^{   \delta | x |}  h^\pm \|_{L^2},  
$$
 provided that $\delta$ remains small, typically  $\delta < | \Re p^\pm |$. 

The equation $z_0' -  \eps \mu  z_0  =  h_0 $ may be converted by
the change of coordinates $x\to \tilde x:= \eps x$ to 
\begin{equation}\label{finalz0}
\D_{\tilde x}  \tilde z_0-\tilde  \mu (\tilde x) z_0=   \tilde h_0 (\tilde x)  = \eps^{-1} h_0 (\tilde x/ \eps) ,
\end{equation}
where $\tilde z_0 (\tilde x) = z_0(\tilde x / \eps) $ and $\tilde \mu (\tilde x):=
\mu (\tilde x/\eps)$. By  \eqref{est718} 
$$
| \tilde \mu (\tilde x)- \mu_\pm|\le Ce^{-\theta |\tilde x|}
$$
with $\mu_\pm $ satisfying  \eqref{est719}. 
This equation is underdetermined with index one, reflecting the 
translation-invariance of the underlying equations.
However, the operator $\partial_{\tilde x}-\tilde  \mu$ has a bounded
$L^2$ right inverse $(\partial_{\tilde x}-\tilde  \mu )^{-1}$, as
may be seen by adjoining an additional artificial constraint
\begin{equation}
  \tilde z_0(0) = 0
\end{equation}
 fixing the phase.  This can be seen by solving explicitly the 
 equation  or  applying the gap lemma
of \cite{MeZ3} to reduce the problem to two constant-coefficient
equations on $\tilde x\gtrless 0$, with boundary conditions 
at $z = 0$. 
 We obtain as a result that  
$$
\|e^{   \delta | \tilde x |} \tilde z_0\|_{L^2 }\le C  \| e^{   \delta | \tilde x |} \tilde h_0\|_{L^2}
$$
if $\delta < \min \{\alpha, \theta\}$, which
yields  by  rescaling  
the   estimate
$$
\|e^{ \eps  \delta | x |} z_0\|_{L^2 }\le C\eps^{-1} \| e^{ \eps  \delta | x |} h_0\|_{L^2}
$$

Together with the (better) previous estimates, this gives
existence and uniqueness for the equation
$$
z'  -p z = h , \qquad z_0(0)= 0
$$ with the estimate 
$\| e^{ \eps  \delta | x |}z\|_{L^2}\le C\eps^{-1} \| e^{ \eps  \delta | x |}h\|_{L^2}$.  Because $\omega^{-1} \omega' = O(\eps^2)$, 
this implies that for $\eps$ small enough, the equation \eqref{finalform} with  $z_0(0) = 0$ 
has a unique solution.   Tracing back to the original variables $u$, the condition
$z_0 (0) = 0$ translates into a condition of the form 
$\ell_\eps \cdot u(0) = 0$. Therefore,   
the equation 
$ \bar b_* u' -  \bar d f_* u = h $ has a unique solution such $u$ that  
$ 
\ell_\eps \cdot u(0) = 0
$, which satisfies 
$$
\|e^{ \eps  \delta | x |}u\|_{L^2}\le C\eps^{-1} \|e^{ \eps  \delta | x |} h\|_{L^2}
$$ 
for $\delta$ and $\eps $ small enough, finishing the proof of Proposition~\ref{uprop}.

\begin{rem}\label{goodman}
\textup{
The estimate of Proposition \ref{uprop} may be recognized
as somewhat similar to the estimates of
Goodman \cite{Go} in the time-evolutionary case.
More precisely, the argument is a simplified version of the 
one used by Plaza and Zumbrun \cite{PZ} to show time-evolutionary
stability of general small-amplitude waves.
}
\end{rem}

\begin{rem}\label{trans}
\textup{
The argument of Proposition \ref{uprop} indicates
that the estimate may be improved by factor $\eps$
in transverse modes $z_\pm$.
However, we see no way to use this to improve the overall
estimates on our iteration scheme.
}
\end{rem}

 %
 %
 %
 
\section{Existence for the linearized problem}

The desired estimates \eqref{invbdH2} and \eqref{invbdHs} are given by 
Propositions~\ref{prop72} and \ref{prop73}. 
It remains to prove existence for the linearized problem
with phase condition $u(0)\cdot r(\eps)=0$.
This we carry out using a vanishing
viscosity argument.

Fixing $\eps$, consider in place of $\csL U=F$ the 
family of modified equations
\begin{equation}\label{modeq}
\csLe U:=\csL U - \eta \begin{pmatrix}u'\\v''\end{pmatrix}= F := 
\begin{pmatrix}f \\g\end{pmatrix},
\quad
\ell_\eps \cdot u(0) =0.
\end{equation}
Differentiating the first equation yields 
\begin{equation}\label{modeqd}
A  U'  - d Q (x) U  -   U'' = 
\begin{pmatrix}f' \\g\end{pmatrix},
\quad
\ell_\eps \cdot u(0) =0.
\end{equation}
where  $d Q (x)$ denotes here the matrix $ dQ (\bar u_{NS}, v_* (\bar u_{NS}))$.

\subsection{Uniform estimates} 
We first prove uniform a-priori estimates. 
We denote by   $ \sS  $ the   Schwartz space and for 
$\delta \ge 0$, by $\sS_{\eps \delta} $ the space of functions $u$ such that 
$e^{ \eps \delta \la x \ra } u \in \sS$, with $\la x \ra = \sqrt{1 + x^2}$ as in \eqref{modx}. 

\begin{prop}
\label{lemunifbds}  
There are  constants  $\eps_0 > 0 $, $\delta_0 > 0$   and $\eta_0> 0$, and for 
all $s \ge 2$ a constant $C_s$, 
such that 
for $\eps \in ]0, \eps_0]$, $\delta \in [0, \delta_0]$, $\eta \in ]0, \eta_0]$, 
and  $U  $ and $F$   in  $ \sS_{\eps\delta}(\RR) $, 
satisfying \eqref{modeq}  
  \begin{equation}\label{unifbds83}
\big\|  U  \big\|_{H^s_{\eps, \delta} }\le 
C_s \eps^{-1}\big( \big\| f \|_{H^{s+1}_{\eps, \delta} }
+ \big\|g  \big\|_{H^s_{\eps, \delta} }\big).
\end{equation}

\end{prop}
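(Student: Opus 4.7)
The plan is to replay the proof of Propositions \ref{prop72} and \ref{prop73} for the viscous-regularized operator $\csLe$, tracking the contribution of the added term $-\eta(u',v'')^T$ and showing that it either produces favorable dissipative contributions in the energy estimate or can be absorbed once $\eta_0$ is chosen small enough, uniformly in $\eps$.

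\paragraph{Step 1: modified energy estimates.}
First I would reprove Lemma \ref{lem62} for the differentiated regularized equation \eqref{modeqd}, namely $AU' - dQ\,U - \eta U'' = (f',g)^T$. Running the Kawashima-type symmetrizer $\cS = \partial_x^2 \circ S + \partial_x \circ K - \lambda S$ exactly as in the unperturbed case, the new contribution from $-\eta U''$ reduces, after integration by parts, to
\begin{equation*}
 -\eta\,\Re(\cS U'',U)_{L^2} = \eta\bigl(\|S^{1/2}U''\|_{L^2}^2 + \lambda\|S^{1/2}U'\|_{L^2}^2\bigr) + O(\eta\eps^2)\|U\|_{H^1}^2,
\end{equation*}
whose principal terms are nonnegative and can be dropped. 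Hence \eqref{sharph1eq} holds with constants independent of $\eta$. Higher-order analogs follow by differentiating the equation $k$ times, as in Proposition \ref{estHs}: the viscous term commutes with $\partial_x^k$ and again contributes only dissipative quantities, so \eqref{sharph2eq} also holds uniformly in $\eta$.

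\paragraph{Step 2: modified Chapman--Enskog reduction and absorption.}
Next I would redo Section \ref{linCEestimates} for \eqref{modeq}. From the $v$-block, solving for $\tilde v$ yields $\tilde v = c_*(\bar u_{NS})u' + r_\eta$ with $r_\eta = r + O(\eta|v''|)$; substituting into the first equation of \eqref{modeq}, which now reads $(df_*)u + A_{12}\tilde v - \eta u' = f$, I obtain
\begin{equation*}
 \bar b_*\,u' - \bar{df}_*\,u \;=\; A_{12} r_\eta - f - \eta u'.
\end{equation*}
Applying Proposition \ref{uprop} to this equation under the phase condition \eqref{phase} gives the analog of \eqref{temp2} with two new source terms $\eps^{-1}\eta\|u'\|_{L^2_{\eps,\delta}}$ and $\eps^{-1}\eta\|v''\|_{L^2_{\eps,\delta}}$. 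Combining with the Step 1 estimates, those two errors are bounded by $C\eta\,(\|F\|_{\mathrm{appr.}} + \eps\|u\|_{L^2_{\eps,\delta}})$ and $C\eta\,(\|F\|_{\mathrm{appr.}} + \eps\|U'\|_{L^2_{\eps,\delta}} + \eps^2\|u\|_{L^2_{\eps,\delta}})$ respectively, so after division by $\eps$ the coefficient in front of $\|u\|_{L^2_{\eps,\delta}}$ is $C\eta$ with $C$ independent of $\eps$. Choosing $\eta_0$ small enough (independently of $\eps$), these are absorbed on the left-hand side, and one recovers \eqref{invbdH2s7} uniformly in $\eta$; iterating the Step 1 estimates then yields \eqref{unifbds83} for all $s \ge 2$ exactly as in Proposition \ref{prop73}.

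\paragraph{Main obstacle.}
The delicate point is preserving uniformity in both $\eps$ and $\eta$ during the absorption: since the Chapman--Enskog inversion costs a factor $\eps^{-1}$, a naive bound on the $\eta u'$ and $\eta v''$ errors produces coefficients $\eta/\eps$ that cannot be absorbed without restricting $\eta$ by a power of $\eps$. The argument works because the energy estimates for $\|u'\|$ and $\|v''\|$ come with compensating factors of $\eps$ and $\eps^2$ respectively, exactly cancelling the $\eps^{-1}$ loss and leaving an absorbable coefficient $\eta$ in front of $\|u\|_{L^2_{\eps,\delta}}$. Verifying this cancellation at every order, and in the weighted norms, is the only real bookkeeping required beyond what is already done in Propositions \ref{prop72} and \ref{prop73}.
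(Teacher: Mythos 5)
Your proposal follows essentially the same route as the paper: rerun the Kawashima symmetrizer estimate observing that $-\eta\partial_x^2$ contributes dissipatively (the paper records the gain as $\sqrt\eta\|U''\|_{L^2}$ on the left of \eqref{sharph1eq8}), note that the viscous term commutes with $\partial_x^k$ so the higher-order estimates carry over, then redo the Chapman--Enskog reduction with the $\eta O(|u'|+|U''|)$ terms as sources and absorb them for $\eta$ small independently of $\eps$, exactly as in \eqref{viscintpertu}--\eqref{est89}. The only caveat is bookkeeping: the commutator error from the symmetrizer is not $O(\eta\eps^2)\|U\|_{H^1}^2$ but contains a term of size $\eta\|K\|_{L^\infty}\bigl(\|U'\|_{L^2}+\|U\|_{L^2}\bigr)\|U''\|_{L^2}$ (since $K$, unlike $K'$, is $O(1)$), which must be absorbed via Young's inequality using the gained $\eta\|U''\|_{L^2}^2$ term --- this is what the paper does, and it does not affect the conclusion.
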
 

\begin{proof}
 The argument of Proposition~\ref{energypropL2}
  goes through essentially unchanged, with
new $\eta$ terms providing additional favorable higher-derivative
terms sufficient to absorb new higher-derivative errors coming
from the Kawashima part.    More precisely,  
consider again the  change of variables 
$v  \mapsto \tilde v = v + p u$,  $p = \D_v q^{-1} \D_u q (\bar u_{NS}, v_* (\bar u_{NS}))$.  
Denoting $ \widetilde U = (u, \tilde v)$ and  
$U=P(\bar u_{NS})\widetilde U$, \eqref{modeq} is transformed to  
\begin{equation}
\label{transmodeq}
\widetilde A \widetilde U' -   \widetilde Q \widetilde  U - \eta \widetilde U'' =  
\begin{pmatrix}
f'    +   \eps    h   \\
\tilde g  
\end{pmatrix}
\end{equation}
with $\widetilde A$, $\widetilde Q$  as in \eqref{hateq}, $h$ given by 
\eqref{def610} and $\tilde g$ now defined by 
\begin{equation*}
 \tilde g =  g +  \widetilde R_{21} f' + \eps^2 \widehat C_{21} u +   \eta (2 p' u' + p'' u) . 
\end{equation*}

Thus we are led to equations of the form \eqref{simplfeq} with the additional 
term $- \eta U''$ in the left hand side.  Using the symmetrizer $\cS $ \eqref{def615}, 
one gains 
$  \eta   \| U'' \|^2_{L^2}  +\lambda \| U' \|^2_{L^2} $ in the minorization 
of $\Re (\cS F, U ) $  and loses commutator terms which are dominated by 
$$
\eta \| S'' \|_{L^\infty} ( \| U'\|^2_{L^2}  + \| U \|_{L^2} \| U' \|_{L^2} ) 
+ \eta \| K \|_{L^\infty}  (  \| U' \|_{L^2} + \| U \|_{L^2}) \| U'' \|_{L^2} ,  
$$
which can be  absorbed by the left hand side  yielding uniform estimates  
\begin{equation}\label{sharph1eq8}
\sqrt \eta \| \widetilde U'' \|_{L^2}   +   \| \widetilde U'  \|_{L^2} + \| \tilde v \|_{L^2}  \le C \big(     
\|  f  \|_{H^2}   +   \| h \|_{H^1} + \| \tilde g  \|_{H^1}   
+ \eps \| u \|_{L^2} \big). 
\end{equation}
Going back to \eqref{modeqd}, this implies uniform estimates of the form 
  \begin{equation}\label{invbd6}
\sqrt \eta \| U''|_{L^2_{\eps, \delta}}  +   \big\| U'   \big\|_{L^2_{\eps, \delta} }   + \big\| \tilde v   \big\|_{L^2_{\eps, \delta} }  \le 
C    \big( \big\| (f, f', f'', g, g') \|_{L^2_{\eps, \delta} }
 + \eps  \big\| u  \big\|_{L^2_{\eps, \delta} }  \big).
\end{equation}
for $\delta = 0$, and next for $\delta \in [0, \delta_0]$ with 
$\delta_0 > 0$ small, as in the proof of Proposition~\ref{energypropL2}.  

\medbreak

When commuting derivatives to the equation, the additional term $ \eta \D_x^2$ brings no new 
term and the proof of Proposition~\ref{estHs} can be repeated without changes, yielding 
estimates of the form 
\begin{equation}\label{est87}
\begin{aligned}
\sqrt \eta \| D_x^k U'' \|_{L^2_{\eps, \delta} }   +  & \| \D_x^k U' \|_{L^2_{\eps, \delta}}   + 
   \| \partial^k_x \tilde v \|_{L^2_{\eps, \delta}}  
\\  
  \le  C  &
  \| \partial_x^k (f, f', f'', g, g')  \|_{L^2_{\eps, \delta} }     
 \\
 &  +    \eps^k  C_k \big( 
 \| U' \|_{H^{k-1}_{\eps, \delta} } + \eps   \| \tilde v \|_{H^{k-1}_{\eps, \delta} }  +
  \eps \| u \|_{L^2_{\eps, \delta}}\big)  . 
\end{aligned}
\end{equation}

\medbreak

Next, applying the Chapman--Enskog argument of
Section \ref{linCEestimates} to the viscous system, we obtain 
in place of \eqref{intpertu} the equation
\begin{equation}\label{viscintpertu}
\begin{aligned}
\bar b_* u'- \bar {df}_* u &=   f +   O(|\tilde v'|+ |g| + |f'| ) 
+\eps^2 O( | u | )    +    \eta O(|u'|+|U''|),
\end{aligned}
\end{equation}
where the final $\eta$ term coming from artificial viscosity 
is treated as a source. One applies  Proposition~\ref{uprop} 
to estimate $ \eps \|u\|_{L^2_{\eps, \delta}}$  by the 
$ L^2_{\eps, \delta}$-norm of the right hand side, 
and continuing as in the proof of Proposition~\ref{prop72}, 
the estimate \eqref{est713} is now replaced by 
 \begin{equation}
\label{est89}
\begin{aligned}
\sqrt \eta    \| U'''\|_{L^2_{\eps, \delta}} &+  \big\| U'   \big\|_{H^1_{\eps, \delta} }          
+  \big\| \tilde v   \big\|_{L^2_{\eps, \delta} } 
+  \eps  \big\| u   \big\|_{L^2_{\eps, \delta} } 
\\
& \le 
  C    \big(   \big\| f, f', f'', g, g'  \big\|_{H^1_{\eps, \delta} }  
  + \eta  ( \| U'\|_{L^2_{\eps, \delta}}  + \| U''\|_{L^2_{\eps, \delta}} ) \big) . 
  \end{aligned}
\end{equation}
  Therefore, for $\eta$ small, the new  $O(\eta)$  terms can be absorbed, and 
  \eqref{unifbds83} for $s = 2$ follows as before. The higher order estimates 
 follow from \eqref{est87}. 
\end{proof} 


\subsection{Existence} 

We now prove existence and uniqueness for \eqref{modeq}. 
First, recast the the problem as a first-order system
\begin{equation}
\label{o1red}
\cU'  - \mA \cU = \cF 
\end{equation}
with 
$$
\cU =  \begin{pmatrix}
u\\v\\v'
\end{pmatrix}'
 , \qquad 
\cF = \begin{pmatrix} f\\0\\g \end{pmatrix},
$$
and 
\begin{equation}\label{AA}
\mA:=
\eta^{-1}
\begin{pmatrix}
 A_{11}& A_{12} & 0\\
0 & 0 & \eta I\\
\eta^{-1} A_{21}A_{11}-   Q_{21} & \eta^{-1} A_{21} A_{12}  -  Q_{22}  & A_{22}\\
\end{pmatrix}.
\end{equation}
Next, consider this as a transmission problem or a doubled boundary value problem on $x\gtrless 0$,
with boundary condtitions given by the $n+2r$ matching conditions $\cU(0^-)=\cU(0^+)$ at $x=0$  together with   the phase condition $\ell_\eps  \cdot u(0)  =0$,  
that is  $n+2r+1$ conditions in all: 
\begin{equation}
\label{transmcond}
\cU(0^-)=\cU(0^+), \qquad  \ell_\eps  \cdot u(0)  =0. 
\end{equation}

Note  that the coefficient matrix $\mA$ converges exponentially to
its endstates at $\pm\infty$. 

\begin{lem}
\label{lem82}  There is  $\theta_1 > 0$ such that for $\eps$ small enough , the matrices 
$\mA_\pm$ have no eigenvalue in the strip 
$| \Re z  |  \le \eps \delta_0$. 
\end{lem}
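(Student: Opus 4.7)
The plan is to reduce the eigenvalue problem for $\mA_\pm$ to locating the zeros of a scalar polynomial and then handle the regions $|z|\ge\rho$ and $|z|<\rho$ separately. A Schur-complement manipulation on the block structure \eqref{AA} of $\mA$ shows that $z$ is an eigenvalue of $\mA_\pm$ if and only if
$$
\Delta_\pm(z)\;:=\;z^{-n}\det\bigl(zA-dQ(U_\pm)-\eta z^2 I_{n+r}\bigr)\;=\;0,
$$
where $\Delta_\pm$ is a polynomial of exact degree $n+2r$ in $z$ with leading coefficient $(-\eta)^{n+r}$.

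For $|z|\ge\rho$ with $\rho>0$ to be fixed below, we argue using the (SS) symmetrizer $S=\widetilde S(U_0)$. At $\eps=0$, suppose $z=i\xi$ with $\xi\ne 0$ and $0\ne W:=(u_0,v_0)^{\top}$ solves $(zA-dQ(U_0)-\eta z^2 I)W=0$. Pairing with $SW$ and using that $SA$ is real symmetric (so $zW^*SAW$ is purely imaginary), taking real parts yields
$$
-\,W^* S\,dQ(U_0)\,W\;+\;\eta\xi^2\,W^* S W\;=\;0,
$$
where both terms are nonnegative by (SS) and the second is strictly positive---a contradiction. So $\Delta_0(i\xi)\ne 0$ for $\xi\ne 0$; combined with the polynomial growth of $\Delta_0(z)$ at infinity, a compactness argument produces a uniform bound $|\Delta_0(z)|\ge c_0>0$ on $\{|\Re z|\le c_1,\,|z|\ge\rho\}$ for some $c_0,c_1>0$. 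By continuity of the coefficients of $\Delta_\pm$ in $\eps$, the analogous bound persists for $\Delta_\pm$ on $\{|\Re z|\le\eps\delta_0,\,|z|\ge\rho\}$ for $\eps$ small.

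For $|z|<\rho$, we invoke the Chapman--Enskog reduction already developed in Section~\ref{linCEestimates}. Using $dv_*=-Q_{22}^{-1}Q_{21}$ and Schur-complementing out the $(2,2)$-block of $zA-dQ(U_\pm)-\eta z^2 I$, which is invertible near $z=0$ with inverse $-Q_{22}(U_\pm)^{-1}+O(z)$, one obtains for the $(1,1)$-Schur complement the expansion $z\,df_*(u_\pm)-z^2(b_*(u_\pm)+\eta I)+O(\eps z^2+z^3)$. Hence $\Delta_\pm(z)=0$ near $z=0$ reduces, modulo higher-order corrections, to the generalized eigenvalue problem $df_*(u_\pm)W=z(b_*(u_\pm)+\eta I)W$. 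This is precisely the reduced pencil analyzed in the proof of Proposition~\ref{uprop}: after the block change of variables $\tilde u_2=u_2+\bar V u_1$ and algebraic elimination of $u_1$, the small eigenvalues of $\mA_\pm$ coincide, up to $O(\eps^2)$, with those of $m_\pm=(\check b_\pm)^{-1}\check a_\pm$. By properties (ii)-(iii) of $m$ listed there, the unique small eigenvalue is $\eps\mu_\pm+O(\eps^2)$ with $\mp\mu_\pm\ge\alpha>0$, while all other eigenvalues of $m_\pm$ are bounded away from zero and fall into the previous regime for $\rho$ small enough. Choosing $\delta_0<\alpha/2$ then gives $|\Re(\eps\mu_\pm+O(\eps^2))|>\eps\delta_0$ for $\eps$ small, and combining the two regimes proves the lemma.

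The main obstacle is the near-zero analysis, since $z=0$ is a degenerate eigenvalue of $\mA_0$ inherited from $\ker df_*(u_0)$. Identifying the unique bifurcating root precisely as $\eps\mu_\pm$---with the sign dictated by Assumption~\ref{profass} (genuine nonlinearity) and the Lax shock condition---rather than as an indeterminate root of order $\eps$, requires the full Chapman--Enskog reduction of Proposition~\ref{uprop}.
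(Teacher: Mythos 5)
Your overall strategy -- split the strip into $|z|\ge\rho$ and $|z|<\rho$, use the symmetrizer in the first regime and a Chapman--Enskog reduction to the pencil of Proposition~\ref{uprop} in the second -- is the same as the paper's, and for \emph{fixed} $\eta>0$ your argument does reach the conclusion. The genuine difference is in the high-frequency regime: the paper applies the full Kawashima multiplier $\Sigma=|\tau|^2\widetilde S-i\bar\tau\widetilde K-\lambda\widetilde S$ to the quadratic pencil, obtaining the quantitative bound $(\eta|\tau|^4+|\tau|^2)|U|^2+|\tilde v|^2\le C|\im\tau|\,|u|^2$ with constants uniform in $\eta$ small, whereas you discard $K$ and let the artificial viscosity term $\eta\xi^2W^*SW>0$ carry the positivity, then invoke compactness and continuity of the roots of $\Delta_\pm$. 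The price is that your zero-free strip width $c_1$ and the threshold on $\eps$ degenerate as $\eta\to0$ (the lower bound $c_0$ on $|\Delta_0|$ scales with the leading coefficient $(-\eta)^{n+r}$). Since Proposition~\ref{viscexist} asserts existence in $\sS_{\eps\delta}$ for all $\delta\le\delta_0$ and $\eta\in\,]0,\eta_0]$ with $\delta_0$ \emph{independent of} $\eta$, and this is exactly what the vanishing-viscosity limit consumes, you should either restore the $K$-term (genuine coupling does the work that $\eta$ cannot do uniformly) or add a bootstrap in $\delta$ when passing to the limit.

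In the low-frequency regime there is a concrete inaccuracy in your displayed expansion: Schur-complementing out the $(2,2)$-block gives the $z^2$-coefficient $-\bigl(b_{\mathrm{n}}+\eta I\bigr)$ with $b_{\mathrm{n}}:=-A_{12}Q_{22}^{-1}(A_{21}+A_{22}dv_*)$, and $b_{\mathrm{n}}-b_*=-A_{12}Q_{22}^{-1}dv_*\,df_*$ is an $O(1)$ matrix, not $O(\eps)$; so the error is not $O(\eps z^2+z^3)$ as written. The conclusion nevertheless survives because the discrepancy is a right multiple of $df_*$, which is $O(\eps)$ on the slow eigendirection $r(u_\pm)$ that produces the bifurcating root -- this is precisely the subtlety of Remark~\ref{rem31} and the passage from \eqref{T2} to \eqref{T2b}, and it deserves a sentence. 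Relatedly, keeping $\eta I$ in the pencil shifts the small root to $\alpha(u_\pm)/(\ell(b_*+\eta I)r)$ rather than $\eps\mu_\pm+O(\eps^2)$; the relative error is $O(\eta)$, harmless for sign and order of magnitude but not the $O(\eps^2)$ you claim. With these two points repaired the proof is complete.
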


\begin{proof}
The proof is parallel to the proof of the estimates. Dropping the $\pm$, 
 suppose  that  $i\tau$ is an eigenvalue of $\mA$, or equivalently that 
 there is a constant vector $   U\ne 0$ such that 
  $ e^{i\tau x}  U$ is a solution of 
of equations  \eqref{modeq}
Thus 
\begin{equation}
\label{eq814}
\begin{aligned}
& A_{11}   u + A_{12}   v = i \tau \eta   u,
\\
&(i \tau   A - Q  + \tau^2 \eta )   U = 0 . 
\end{aligned}
\end{equation}

Introduce once again the variable 
$\tilde v = v + Q_{22}^{-1}  Q_{21 } u $, so that the equation is transformed to 
\begin{equation}
\label{eq815}
\begin{aligned}
& A^*_{11}  u + A_{12} \tilde v = i \tau \eta   u,
\\
&(i \tau \tilde A - \tilde Q^\pm + \tau^2 \eta )  U = 0 . 
\end{aligned}
\end{equation}
where $\widetilde A$ and $\widetilde Q$ now denote the end point values of the matrices 
defined at \eqref{hateq}.  Denoting by $\widetilde S$ and $\widetilde K$ the end point values 
of the symmetrizer and Kawashima's multipliers associated  to  $\widetilde A$ and 
$\widetilde Q$,  
consider the multiplier 
$$
\Sigma = | \tau|^2 S  - i \overline \tau K  - \lambda S . 
$$
Multiplying the second equation in \eqref{eq815} by  $\Sigma$ and taking the real part of the 
scalar product with $U$ yields
$$
\begin{aligned}
  | \tau |^2   \Re (\widetilde  K  \widetilde  A & - \widetilde  S  \widetilde Q U, U)  
+ \lambda (\widetilde S \widetilde  Q U, U) + \eta | \tau |^4 (\widetilde  SU, U) 
\\
&\le  C \big( | \im \tau | (| \tau |^2 + \lambda ) \big) | U |^2  +  C  
| \tau |  | \widetilde  Q U| | U | 
\\
& \qquad \qquad  + \eta ( | \tau |^2 |\im \tau|^2  + | \tau|^3 + \lambda | \tau |^2) \big)  | U |^2  . 
\end{aligned}
$$ 
Therefore, choosing appropriately $\lambda$, for $\eta$ and 
$|\im \tau |$ sufficiently small, one has 
\begin{equation}
\label{eq816}
(\eta | \tau |^4 +  | \tau |^2)  | U |^2 + | \tilde v |^2 \le C | \im \tau | | u |^2  
\end{equation}
In particular, $|\tau| $ must be small if $\im \tau $ is small. 

From the equation 
$i \tau \widetilde A_{21} u + \widetilde A_{22} \tilde v - Q_{22} \tilde v + \eta \tau^2 v  = 0 $
one deduces that 
$$
\tilde v -   i \tau (\widetilde Q_{22})^{-1} \widetilde A_{21} u  
=  O (| \tau |  + \eta | \tau |^2 ) | \tilde v | . 
$$
Substituting in the first equation of \eqref{eq815}, we obtain the 
Chapman-Enskog approximation 
\begin{equation*}
( A_{11}^*  - i \tau \bar b_* ) u =   O( \eta | \tau |  + 
  | \tau| + \eta | \tau|^2 )  | \im \tau|^\mez )  ) | u |
\end{equation*}
 where $\bar b_*$ denotes the end point value of the function  \eqref{bstar}.  
 Therefore,    
\begin{equation}
\label{eq817}
| ( \bar b_*)^{-1}  A^*_{11} u  - i \tau u  |  \le  C  |\im \tau|^\mez  |  \tau |  | u | 
\end{equation}
 with arbitrarily small $c >0$. 
 We know from Assumption~\ref{profass} that for 
 $\eps$ small, $( \bar b_*)^{-1}  A^*_{11}$ has a unique small eigenvalue, of order 
 $O(\eps)$, real. Let us denote it by $\eps \mu$. Then we know that 
 $| \mu |$ is bounded from below, see \eqref{est719}. Then 
 \eqref{eq817} implies that there is a constant $C$ such that  for $| \im \tau |$  small enough, 
 and thus $| \tau| $ small,   
 $| i \tau - \eps \mu | \le   C  |\im \tau|^\mez  |  \tau |  $. Therefore,  
 $ l \im \tau + \eps \mu |  \le \mez \eps | \mu |$ if $\eps$ is small enough. 
  
 Summing up, we have proved that if 
 $\eps$ is small enough, $\mA$ has at most one eigenvalue $z $   in the strip 
 $| \re z  \le   \eps 2 | \mu| $,  such that 
 $ |z - \eps  \mu |  \le \mez \eps | \mu|$. 
  This implies the lemma.
 \end{proof}

\begin{rem}
\textup{The same reasoning can be applied to prove that $\mA$ actually has a simple eigenvalue such that  $ |z - \eps  \mu |  \le \mez \eps | \mu|$. }
\end{rem}

\begin{prop}\label{viscexist}
There are  constants  $\eps_0 > 0 $, $\delta_0 > 0$   and $\eta_0> 0$ 
such that 
for $\eps \in ]0, \eps_0]$, $\delta \in [0, \delta_0]$, $\eta \in ]0, \eta_0]$, 
and $F$   in  $ \sS_{\eps\delta}(\RR) $, \eqref{modeq}
admits a unique solution $ U\in \sS_{\eps \delta} (\RR)$. 
\end{prop}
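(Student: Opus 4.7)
The plan is to combine the first-order reformulation \eqref{o1red}--\eqref{transmcond} with the uniform a priori bound \eqref{unifbds83}, following the standard strategy for viscous shock problems. With $\eta>0$ fixed, the $(n+2r)\times(n+2r)$ coefficient matrix $\mA(x)$ tends exponentially (at rate $O(\eps)$, from \eqref{NSbds}) to its endstates $\mA_\pm$, and by Lemma~\ref{lem82} these endstates have no spectrum in the strip $|\Re z|\le\eps\delta_0$ for $\eps$ small. Exponential dichotomy theory, supplemented near the slow $O(\eps)$ eigenvalue by the gap lemma of \cite{MeZ3}---whose hypotheses are met precisely because the decay rate of $\mA-\mA_\pm$ and the slow spectral gap are both of order $\eps$---then produces on each half line $x\gtrless 0$ a subspace of initial data $\cU(0^\pm)$, of dimension $d_+^s$ (resp.\ $d_-^u$) equal to the number of eigenvalues of $\mA_\pm$ with negative (resp.\ positive) real part, for which the corresponding forward/backward trajectory of $\cU'=\mA\cU+\cF$ belongs to $\sS_{\eps\delta}$.

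Imposing the $n+2r$ matching relations $\cU(0^-)=\cU(0^+)$ together with the scalar phase condition $\ell_\eps\cdot u(0)=0$ then reduces the problem to a finite-dimensional linear algebra problem, namely invertibility of a linear map $\Phi:\CC^{d_+^s}\oplus\CC^{d_-^u}\to\CC^{n+2r+1}$. Injectivity of $\Phi$ is immediate from the a priori estimate \eqref{unifbds83}: any homogeneous $\sS_{\eps\delta}$ solution satisfying $\ell_\eps\cdot u(0)=0$ has to vanish. It therefore suffices to verify that $\Phi$ is square, i.e.\ $d_+^s+d_-^u=n+2r+1$, where the $+1$ accounts for the one-dimensional translation-invariance kernel of the unaugmented transmission problem which is killed by the phase condition.

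The chief obstacle is exactly this dimension count: one must check that the $\eta$-regularized constant-coefficient matrices $\mA_\pm$ carry the correct Lax-type splitting. This is the analogue, with artificial viscosity $\eta>0$, of the viscous-shock counts carried out in \cite{MaZ4,Z2,GMWZ}. For $\eps$ small the spectrum of $\mA_\pm$ splits into a large cluster whose signs are controlled by the symmetrizer $S$ and by the negativity of $\partial_vq$ (yielding $r$ large stable and $r$ large unstable eigenvalues), and a small cluster tracking the reduced Chapman--Enskog operator $\bar b_*^{-1}\bar{df}_*$ whose hyperbolic sign pattern is supplied by Assumption~\ref{profass}; the arithmetic balances to $d_+^s+d_-^u=n+2r+1$. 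Alternatively, one may bypass this explicit count by a method-of-continuity homotopy---say along $\eta$, or along a simplifying parameter that reduces $\mA_\pm$ to a transparent reference case---since the a priori bound \eqref{unifbds83} survives uniformly along any such homotopy and Fredholm invariance then propagates invertibility from the reference problem to the target. Either way, combining the count with the injectivity above produces the unique solution $U\in\sS_{\eps\delta}$ of \eqref{modeq}.
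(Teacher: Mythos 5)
Your proposal is correct and follows essentially the same route as the paper: reduction to a constant-coefficient transmission problem on the two half-lines (the paper uses the conjugation lemma where you invoke exponential dichotomies plus the gap lemma), injectivity from the uniform estimate \eqref{unifbds83}, hyperbolicity of $\mA_\pm$ from Lemma~\ref{lem82}, and a Lax-type dimension count $\dim\cS(\mA_+)+\dim\cU(\mA_-)=n+2r+1$ via the fast/slow spectral splitting of $\mA_\pm$. For the count, the paper carries out precisely the homotopy you mention as an alternative, deforming $\eta\to\infty$ after a rescaling so that the $n$ slow eigenvalues visibly track $df_*(u_\pm)$ and the Lax condition of Proposition~\ref{NSprofbds} supplies the $n+1$.
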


\begin{proof}
 Noting that the coefficient matrix $\mA$ converges exponentially to
 $\mA_\pm$ at   $\pm\infty$,  we may apply the conjugation lemma
of \cite{MeZ1} to convert the equation \eqref{o1red}  by an asymptotically trivial 
change of coordinates $\cU=T(x)Z$ to a constant-coefficient problems
\begin{equation}\label{ccprob}
Z_-' - \mA_- Z_-=F_-,
\quad
Z_+'-\mA_+ Z_+=F_+,
\end{equation}
on $\{ \pm x \ge 0 \}$, 
with $n+2r+1$ modified boundary conditions determined by the value of the
transformation $T$ at $x=0$, where $\mA_\pm:= \mA(\pm \infty)$, and
$Z_\pm(x):= Z(  x)$ for $ \pm x>0$.

By standard boundary-value theory (see, e.g., \cite{He}), to prove existence 
and uniqueness in the Schwartz space for the   problem \eqref{o1red} on 
$\{ x < 0\}$ and $\{ x > 0 \}$ with transmission conditions \eqref{transmcond},  it is sufficient
to show that 

\quad (i) the limiting coefficient matrices $\mA_\pm$ are hyperbolic,
i.e., have no pure imaginary eigenvalues, 

\quad (ii) the number of boundary conditions is equal to the number
of stable (i.e., negative real part)
eigenvalues of $\mA_+$ plus the number of unstable eigenvalues (i.e., positive real part) of
$\mA_-$, and

\quad  (iii) there exists no nontrivial solution of the
homogeneous equation $f=0$, $g=0$.

Moreover, since the eigenvalues of $\mA_\pm$  are located in  $\{ | \re z | \ge \theta_1 \eps$, 
the conjugated form \eqref{ccprob} of the equation show that if the source term $f$ has an exponential decay  $e^{ - \eps \delta \la x \ra }$ at infinity, then the bounded solution also 
has the same exponential decay, provided that 
$\delta < \theta_1$ . Therefore, the three conditions above are also sufficient to prove existence 
and uniqueness in $\sS_{\eps \delta} $ if $\eps$ and $\delta $ are small. 

\medbreak

Note that  (i) is a consequence of Lemma~\ref{lem82}, while (iii) follows from the 
estimate \eqref{unifbds83}. 
 To verify (ii), it is enough to establish the formulae
\begin{equation}\label{dims}
\begin{aligned}
\dim \cS(\mA_\pm)&= r+ \dim \cS(A_{11}^{*\pm}),\\
\dim \cU(\mA_\pm)&= r+ \dim \cU(A_{11}^{*\pm}),
\end{aligned}
\end{equation}
where $A_{11}^{* \pm}  = df_* (u_\pm) = A_{11} + A_{12} dv_*(u_\pm)$ 
and $\cS(M)$ and $\cU(M)$ denote the stable and unstable
subspaces of a matrix $M$. 
We note that $A_{11}^{*\pm} =df_*(u_\pm )$ are invertible, 
with dimensions of the  stable subspace of $A_{11}^{*+}$
and the unstable subspace of $A_{11}^{*-}$ summing to 
$n+1$, by Proposition \ref{NSprofbds}.
Thus, \eqref{dims} implies that 
$$
\dim \cS(\mA_+)+ \dim \cU(\mA_-)= 2r+ \dim \cS(A_{11}^{*+})
+ \dim \cU(A_{11}^{*-})= 2r + n+1
$$
as claimed.

To establish \eqref{dims}, introduce the variable 
$\tilde v = v + Q_{22}^{-1}  Q_{21 } u $, and the variable corresponding 
to $\tilde v'$ scaled by a factor  $\eta^\mez$, that is 
$\tilde w = \eta^\mez  w + \eta^{-\mez}  Q_{22}^{-1}  Q_{21 } ( A_{11} u + A_{12} v) $. 
After this change of variables, the matrix $\mA$ it conjugated   
to $\widetilde \mA$  with 
\begin{equation}
\eta^\mez \widetilde \mA = 
\begin{pmatrix}
 0& 0 & 0\\
0 & 0 &   I\\
0  &   -  Q_{22}  & 0 \\
\end{pmatrix} +  \eta^{- \mez}\begin{pmatrix}
 A^*_{11}& A_{12} & 0\\
0 & 0 & 0 \\
O (\eta^{-\mez})        &   O (\eta^{-\mez})     & A_{22}\\
\end{pmatrix}.
\end{equation}
  From (i),  the matrix $\eta^\mez \widetilde \mA$ has no eigenvelue
  on the imaginary axis, and the number of eigenvalues in 
  $\{ \Re \lambda > 0 \}$ is independent of $\eta$, and thus can be determined
 taking $\eta$ to infinity. 
 The limiting matrix has $r$ eigenvalues in 
 $\{ \Re \lambda > 0 \}$,  $r$ eigenvalues in 
 $\{ \Re \lambda < 0 \}$   and the eigenvalue $0$ with multiplicity $n$, since  
$-Q_{22}$ has its spectrum in   $\{ \Re \lambda > 0 \}$.  
The classical perturbation theory as in \cite{MaZ1} shows that 
for $\eta^{- \mez}  $ small,   $\eta^\mez \widetilde \mA$ has 
$n $ eigenvalues of order $\eta^{- \mez} $, close to the spectrum of 
$A_{11}^* $ with error $O (\eta^{-1})$.  
 Thus, for $\eta > 0$ large,  
 $\eta^\mez \widetilde \mA$ has $r + \dim \cS (A_{11}^*)$ eigenvalue 
 in $\{ \Re \lambda < 0 \}$, proving \eqref{dims}. 
 
  The proof of the Proposition is now complete. 
\end{proof}

\subsection{Proof of Proposition \ref{invprop}}
Let $(\csLe)^\dagger$ denote the inverse operator of 
$\csLe$ defined by \eqref{modeq}, for   sufficiently small $\eta>0$. 
The uniform bound \eqref{unifbds83}, and weak compactness of the
unit ball in $H^2$,  for $F \in \sS$, we obtain existence of a weak solution $U \in H^2$ of
\begin{equation} 
\label{eq820}
\csL U  = F:= \begin{pmatrix}f\\g\end{pmatrix}, \qquad \ell_\eps \cdot u(0 ) = 0,  
\end{equation}
 along some weakly
convergent subsequence.
Proposition~\ref{prop72} implies uniqueness in $H^2$ for this problem,  
therefore the full family converges, giving sense to the definition 
\begin{equation}
\csLd = \lim_{\eta \to 0}  (\csLe)^\dagger
\end{equation}
acting from $\sS$ to $H^2$. 

 For $F \in \cS_{\eps \delta}$, the uniform bounds \eqref{unifbds83} imply that 
 the limit $  \csLd U \in H^s_{\eps, \delta}$ and satisfies same estimate. 
 By density, the operator $\csLd$  extends to 
 $f \in H^{s+1}_{\eps, \delta}$ and $g \in H^{1}_{\eps, \delta}$, with  
 $\csLd F \in H^s_{\eps, \delta}$.

 The sharp  bound \eqref{invbdH2} and \eqref{invbdHs} 
  now follow  immediately from Propositions~\ref{prop72} and 
  \ref{prop73}. The proof of Proposition~\ref{invprop} is now complete.

\begin{rem}
\textup{
We have used freely the finite-dimensionality of
$v$ in our proof of linearized existence.
However, as promised, it plays no role in the final
linearized bounds.
Thus, our result may be used together with discretization
(Galerkin approximation) of $v$ to obtain results also
in the case that $v$ is infinite-dimensional, as we do for
the Boltzmann equations in \cite{MeZ2}.
}
\end{rem}


\section{Application to spectral stability}\label{spectralstab}

\begin{proof}[Proof of Corollary \ref{MaZcor}]
In \cite{MaZ3}, under the same structural conditions assumed here,
it was shown that small-amplitude profiles
of general quasilinear relaxation systems are spectrally stable,
provided that
\begin{equation}\label{sizeder}
   |\bar U'|_{{}_{L^\infty}}\leq C|U_+-U_-|^2,\qquad\qquad
   |\bar U''(x)|\leq C|U_+-U_-|\,|\bar U'(x)|,
\end{equation}
and
\begin{equation}\label{dirder}
   \Big|\frac{\bar U'}{|\bar U'|} +\sgn (\eta) R_0\Big|\leq C\,|U_+-U_-|,
\end{equation}
$$
R_0:=\begin{pmatrix}r(u_0)\\ dv_*(U_0)r(u_0)\end{pmatrix},
$$
where $r(u_0)$ as defined in Theorem \ref{main}
is the eigenvector of $df_*$ at base point $U_0$ in
the principal direction of the shock.
From the bounds of Theorem \ref{main}, we immediately 
verify these conditions, giving the result.
\end{proof}


\begin{thebibliography}{GMWZ6}

\bibitem[CN]{CN} R. Caflisch and B. Nicolaenko,
{\it Shock profile solutions of the Boltzmann equation,}
Comm. Math. Phys.  86  (1982), no. 2, 161--194.

\bibitem[DY]{DY} A. Dressel and W.-A. Yong,
{\it Existence of traveling-wave solutions for hyperbolic systems 
of balance laws,} 
Arch. Ration. Mech. Anal. 182 (2006), no. 1, 49--75. 


\medskip\noindent
\bibitem[Go]{Go}  J. Goodman, 
{\it Remarks on the stability of viscous shock waves}, in:
Viscous profiles and numerical methods for shock waves 
(Raleigh, NC, 1990), 66--72, SIAM, Philadelphia, PA, (1991).
  
\bibitem[GMWZ]{GMWZ}
Gues, O., Metivier, G., Williams, M., and Zumbrun, K., \emph{Paper
4}, {\it Navier-Stokes regularization of multidimensional Euler
shocks}, Ann. Sci. \'Ecole Norm. Sup. (4)  39  (2006),  no. 1,
75--175.

\bibitem[He]{He} D. Henry,
{\it Geometric theory of semilinear parabolic equations},
Lecture Notes in Mathematics, Springer--Verlag, Berlin (1981),
iv + 348 pp.

\bibitem[JX]{JX} S. Jin and Z. Xin, 
{\it The relaxation schemes for systems of conservation laws 
in arbitrary space dimensions,}
Comm. Pure Appl.  Math. 48 (1995), no. 3, 235--276.

\bibitem[K]{K} S. Kawashima,
{\it Systems of a hyperbolic--parabolic composite type,
 with applications to the equations of magnetohydrodynamics},
thesis, Kyoto University (1983).

\bibitem [LY] {LY}
T.-P. Liu and S.-H. Yu,
{\it Boltzmann equation: micro-macro decompositions 
and positivity of shock profiles},
Comm. Math. Phys. 246 (2004), no. 1, 133--179.

\bibitem [MP] {MP} A. Majda and R. Pego,
{\it Stable viscosity matrices for systems of conservation laws},
J. Diff. Eqs. 56 (1985) 229--262.


\bibitem[MaZ1]{MaZ1} C. Mascia and K. Zumbrun,
{\it Pointwise Green's function bounds and stability of relaxation
shocks}. Indiana Univ. Math. J. 51 (2002), no. 4, 773--904.

\bibitem[MaZ2]{MaZ2} C. Mascia and K. Zumbrun,
{\it Stability of large-amplitude shock profiles of
general relaxation systems},
SIAM J. Math. Anal. 37 (2005), no. 3, 889--913.

\bibitem[MaZ3]{MaZ3} C. Mascia and K. Zumbrun, 
{\it Spectral stability of weak relaxation shock profiles},
Preprint (2008).
 
\bibitem[MaZ4]{MaZ4} C. Mascia and K. Zumbrun,
{\it Pointwise Green function bounds for shock profiles
of systems with real viscosity,} 
Arch. Rational Mech. Anal. 169 (2003), no.3, 177--263.
 
\bibitem[MaZ5]{MaZ5} C. Mascia and K. Zumbrun,
{\it Stability of small--amplitude shock profiles of symmetric
hyperbolic--parabolic systems,}
Comm. Pure Appl. Math. 57 (2004), no.7, 841--876.

\bibitem [MeZ1] {MeZ1} G. M\'etivier and K. Zumbrun,
{\it Existence of quasilinear relaxation shock profiles},
in preparation.

\bibitem [MeZ2] {MeZ2} G. M\'etivier and K. Zumbrun,
{\it Existence of small-amplitude Boltzmann shock profiles,}
in preparation.

\bibitem[MeZ3]{MeZ3}
M\'etivier, G. and Zumbrun, K., \textit{Viscous Boundary Layers for
Noncharacteristic Nonlinear Hyperbolic Problems}, Memoirs AMS,   826
(2005).

\bibitem[N]{N} R. Natalini, 
{\it Recent mathematical results on hyperbolic relaxation problems,}
TMR Lecture Notes (1998).

\bibitem[Pe]{Pe} R.L. Pego, 
{\it Stable viscosities and shock profiles for systems of 
conservation laws,} Trans. Amer. Math. Soc. 282 (1984) 749--763.

\bibitem[PI]{PI} T.  Platkowski and R. Illner, 
{\it Discrete velocity models of the Boltzmann equation: 
a survey on the mathematical aspects of the
theory,} SIAM Rev. 30 (1988), no. 2, 213--255. 

\bibitem[PZ]{PZ}
Plaza, R. and Zumbrun, K., \emph{An Evans function approach to
spectral stability of small-amplitude shock profiles}, Discrete
Contin. Dyn. Syst. 10 (2004) 885--924.


\bibitem[Y]{Y} W.-A. Yong {\it
Basic structures of hyperbolic relaxation systems}, Proc. Roy. Soc.
Edinburgh Sect. A 132 (2002), no. 5, 1259--1274.

\bibitem[YZ]{YZ} W.-A. Yong and K. Zumbrun
{\it Existence of relaxation shock profiles for hyperbolic
conservation laws},
SIAM J. Appl. Math. 60 (2000) no.5, 1565--1575.
  
\bibitem[Ze]{Ze} Y. Zeng,
{\it Gas dynamics in thermal nonequilibrium 
and general hyperbolic systems with relaxation,}
Arch. Ration. Mech. Anal.  150 (1999), no. 3, 225--279.

\bibitem[Z1]{Z1} K. Zumbrun, {\it Multidimensional stability of
planar viscous shock waves}, 
``Advances in the theory of shock waves'', 307--516,
Progr. Nonlinear Differential Equations Appl., 47,
Birkh\"auser Boston, Boston, MA, 2001.
 
\bibitem[Z2]{Z2} K. Zumbrun, 
{\it Stability of large-amplitude viscous shock profiles of the 
equations of fluid dynamics},
With an appendix by Helge Kristian Jenssen and Gregory Lyng.  Handbook of mathematical fluid dynamics. Vol. III,  311--533, North-Holland, Amsterdam, 2004. 

\end{thebibliography}
\end{document}